\newtheorem{theorem}{Theorem}[section]
\newtheorem{lemma}[theorem]{Lemma}
\newtheorem{proposition}[theorem]{Proposition}
\newcommand{\mres}{\mathbin{\vrule height 1.6ex depth 0pt width
0.13ex\vrule height 0.13ex depth 0pt width 1.3ex}}
\def\R{\Bbb R}
\def\H{\text{\rm Hess}}
\def\oint{\frac{\ \ }{\ \ }{\hskip -0.4cm}\int}
\numberwithin{equation}{section}
\begin{document}

\title[$L^p$-Minkowski flow]{\bf Anisotropic flow, entropy and $L^p$-Minkowski problem }

\author{K\'aroly J. B\"or\"oczky}
\address{Alfr\'ed R\'enyi Institute of Mathematics\\
        Hungarian Academy of Sciences\\
       }
\email{carlos@renyi.hu}

\author{Pengfei Guan}
\address{Department of Mathematics and Statistics\\
        McGill University\\
        Montreal, Quebec, H3A 2K6, Canada.}
\email{pengfei.guan@mcgill.ca}

\thanks{}
\begin{abstract}
We provide a natural simple argument using anistropic flows to prove the existence of weak solutions to Lutwak's $L^p$-Minkowski problem on $S^n$ which were obtained by other methods.
\end{abstract}
\subjclass{35K55, 35B65, 53A05, 58G11}

\maketitle

\leftskip 0 true cm \rightskip 0 true cm
\renewcommand{\theequation}{\arabic{equation}}
\setcounter{equation}{0} \numberwithin{equation}{section}

\section{Introduction}

For $\alpha>0$ and non-negative $f\in L^1(\mathbb{S}^n)$ with positive integral, we are interested finding a weak solution to the Monge-Amp\'ere equation 
\begin{equation}\label{eq:min1}
u^{\frac1\alpha}\det(\bar{\nabla}^2_{ij} u+u\bar{g}_{ij})=f,
\end{equation}
or in other words, a weak solution to Lutwak's $L^p$-Minkowski problem on $S^n$ when $-n-1<p<1$
for $p=1-\frac1\alpha$ where
$\bar{\nabla}$ is the Levi-Civita connection of $\mathbb{S}^n$, $\bar{g}_{ij}$, with $\bar{g}$ being the induced round metric on the unit sphere. By a weak (Alexandrov)  solution we mean the following: Given a non-trivial
 finite Borel meaure $\mu$ on $\mathbb{S}^n$ (for example, $d\mu=f\,d\theta$ for the Lebesgue meaure $\theta$ on $S^n$
and the $f$ in \eqref{eq:min1}), find a convex body $\Omega\subset\R^{n+1}$ with $o\in\Omega$ such that
\begin{equation}\label{eq:Alexandrov}
d\mu=u^{\frac1\alpha}\,dS_\Omega
\end{equation}
where $u(x)=\max_{z\in\Omega}\langle x,z\rangle$ is the support function 
and $S_\Omega$ is the surface area measure of $\Omega$ (see Schneider \cite{Sch14}). If $\partial \Omega$ is $C^2_+$,
then 
$$
dS_\Omega=\det(\bar{\nabla}^2_{ij} u+u\bar{g}_{ij})d\theta=K^{-1}d\theta
$$ 
where $K(x)$ is the Gaussian curvature at the point of $\partial\Omega$ where $x\in S^n$ is the exterior unit normal
(see Schneider \cite{Sch14}). Concerning the regularity of the solution
of \eqref{eq:min1}, if 
$f\in C^{0,\beta}(S^n)$ and $u$ are positive, then $u$ is $C^{2,\beta}$ according to Caffarelli's regularity theory
in \cite{Caffarelli1,Caffarelli2}. On the other hand,  even if $f$ is positive and continuous for $\alpha>\frac1n$, there might exists 
weak solution where $u(x)=0$ for some $x\in S^n$ and $u$ is not even $C^1$ 
according to Example~4.2 in Bianchi, B\"or\"oczky, Colesanti \cite{BBC20}. Moreover,  
even if $f\in C^{0,\beta}(S^n)$ is positive, it is possible that $u(x)=0$ for some $x\in S^n$ for $\alpha>\frac1n$, but
 Kyeongsu Choi, Minhyun Kim, Taehun Lee \cite{Choi-Kim-Lee} still managed to obtain some regularity results in this case.

The case $\alpha=\frac1{n+2}$ of the Monge-Amp\'ere equation \eqref{eq:min1} is the critical case 
when the left hand side of \eqref{eq:min1} is invariant under linear transformations of $\Omega$,
and the case $\alpha=1$ is the so-called logarithmic Minkowski problem posed by Firey \cite{Firey}.
Setting $p=1-\frac1\alpha<1$, the Monge-Amp\'ere equation \eqref{eq:min1} is Lutwak's
 $L^p$-Minkowski problem 
\begin{equation}\label{eq:Mongep}
u^{1-p}\det(\bar{\nabla}^2_{ij} u+u\bar{g}_{ij})=f.
\end{equation}
In this notation, \eqref{eq:Alexandrov} reads as
\begin{equation}\label{eq:Alexandrovp}
d\mu=u^{1-p}\,dS_\Omega
\end{equation}
that equation makes sense for any $p\in\R$.
Within the rapidly developing $L_p$-Brunn-Minkowski theory (where $p=1$ is the classical case originating from Minkowski's oeuvre)
initiated by Lutwak \cite{Lut93,Lut93a,Lut96},
if $p>1$ and $p\neq n+1$, then Hug, Lutwak, Yang, Zhang \cite{HLYZ2} (improving on Chou, Wang \cite{Chou-Wang}) prove that
\eqref{eq:Alexandrovp} has an Alexandrov solution if and only if the $\mu$ is not concentrated onto any closed hemisphere,
and the solution is unique.
We note that there are examples  in \cite{GL} (see also \cite{HLYZ2} ) show that if $1<p<n+1$, then it may happen that the density function $f$ is a positive continuous in \eqref{eq:Mongep} and $o\in\partial K$ holds for the unique  Alexandrov solution,
and actually  Bianchi, B\"or\"oczky, Colesanti \cite{BBC20} exhibit an example that
$o\in\partial K$ even if
the density function $f$ is a positive continuous in \eqref{eq:Mongep} assuming $-n-1<p<1$ .

%There is a rapid development finding sufficient conditions for existence of weak solutions to equation \eqref{eq:Mongep}. 
 In the case $p\in(0,1)$ (or equivalently, $\alpha>1$), if the
measure $\mu$ is not concentrated onto any great subsphere of $S^n$, then Chen, Li, Zhu \cite{CLZ17} prove that there 
exists an Alexandrov solution $K\in\mathcal{K}_o^n$ of \eqref{eq:Alexandrovp} using a variational argument
(see also Bianchi, B\"or\"oczky, Colesanti, Yang \cite{BBCY19}). We note for $p\in(0,1)$ and $n\geq 2$, no complete characterization of $L_p$ surface area measures is known (see B\"or\"oczky, Trinh \cite{BoT17} for 
 the case $n=1$, and  \cite{BBCY19} and
Saroglou \cite{Sar} for partial results about the case when $n\geq 2$ and the support of $\mu$ is contained in a great subspare of $S^n$).

Concerning the case $p=0$ 
(or equivalently, $\alpha=1$), the still open logarithmic Minkowski problem \eqref{eq:Mongep} or \eqref{eq:Alexandrovp}  
 was posed by Firey \cite{Firey} in 1974. The paper B\"or\"oczky, Lutwak, Yang, Zhang \cite{BLYZ13} characterized 
 even measures $\mu$ such that \eqref{eq:Alexandrovp} has an even solution for $p=0$
by the so-called subspace concentration condition (see (a) and (b) in Theorem~\ref{mainT1}). In general,
Chen, Li, Zhu \cite{CLZ19} proved that if a non-trival finite Borel measure $\mu$ on $S^{n-1}$  satisfies the same
subspace concentration condition, then 
 \eqref{eq:Alexandrovp} has a solution for $p=0$.
 On the other hand, B\"or\"oczky, Hegedus \cite{BoH15} provide conditions on the restriction of the $\mu$  in \eqref{eq:Alexandrovp}   to a pair of antipodal points.

If $-n-1<p<0$ (or equivalently, $\frac1{n+2}<\alpha<1$) and 
$f\in L_{\frac{n+1}{n+1+p}}(S^{n})$ in \eqref{eq:Mongep}, then \eqref{eq:Mongep} has a solution according to
Bianchi, B\"or\"oczky, Colesanti, Yang \cite{BBCY19}. 
For a rather special discrete measure $\mu$ satisfying that $\mu$ is not concentrated on any closed hemisphere and any $n$ unit vectors in the support of $\mu$ are independent, Zhu \cite{Zhu17} solves the $L_p$-Minkowski problem \eqref{eq:Alexandrovp} for $p<0$.
The $p=-n-1$  (or equivalently, $\alpha=\frac1{n+2}$) case of the $L_p$-Minkowski problem is the critical case because its link with  the ${\rm SL}(n)$ invariant
centro-affine curvature whose reciprocal is $u^{n+2}\det(\bar{\nabla}^2_{ij} u+u\bar{g}_{ij})$
 (see Hug \cite{Hug96} or Ludwig \cite{Lud10}). For positive results concerning the critical case $p=-n-1$, see for example 
Jian, Lu, Zhu \cite{JLZ16} and Li, Guang, Wang \cite{LGWc}, and for obstructions for a solution, see for example
Chou, Wang \cite{Chou-Wang} and
Du \cite{Du21}.

In the super-critical case $p<-n-1$ (or equivalently, $\alpha<\frac1{n+2}$),  there is a recent important work by Li, Guang, Wang \cite{LGWa} proving that 
for any positive $C^2$ function $f$, there exists a $C^4$ solution of  \eqref{eq:Mongep}. 
See also \cite{Du21} for non-existence examples. 

The main contribution of this paper is to provide a very natural argument based on  anisotropic flows developed by Andrews \cite{Andrews-pjm} to handle the case 
$-n-1<p<1$, or equivalently, the case $\frac1{n+2}<\alpha<\infty$. 

 {\it Entropy functional}. For any convex body $\Omega$, a fixed positive function $f$ on $\mathbb{S}^n$ and 
$\alpha \in (0, \infty)$, define
\begin{equation}\label{eq:def-e}
\mathcal{E}_{\alpha, f} (\Omega) :=
\sup_{z\in\Omega}{\mathcal E}_{\alpha, f}(\Omega,z),
\end{equation}
where
\begin{equation}\label{eq:def-e-z}
{\mathcal E}_{\alpha, f}(\Omega,z) :=
\begin{cases}
\frac{\alpha}{\alpha-1}\log\left(\oint_{\mathbb{S}^n} u_{z}(x)^{1-\frac1\alpha}\,f(x) d\theta(x)\right),&\alpha\neq 1;\\
\oint_{\mathbb{S}^n}  \log(u_{z_0}(x))\, f(x) d\theta(x),&\alpha=1.
\end{cases}
\end{equation}
 Here $u_{z_0}(x):=\sup_{z\in\Omega}\left\langle z-z_0,x\right\rangle$ is the \emph{support function} of $\Omega$ in direction $x$ with respect to $z_0$ and %$\alpha$ relates to $p$  via the equation $\frac{1}{\alpha}=1-p$, 
$\oint_{\mathbb{S}^n} h(x)\, d\theta(x)=\frac{1}{\omega_n} \int_{\mathbb{S}^n} h(x)$ with $\omega_n$  being the surface area of $\mathbb{S}^n$ and $\theta$ is the Lebesgue measure on $S^n$. %Note that the integral  $\frac{1}{n+1}\int_{\mathbb{S}^n} u_{z_0}(x)^{q-n-1}\,d\theta(x)$ is interpreted  as a {\it weighted volume} in  Proposition 2.3 of \cite{AGN}. For positive integers $q$  it is known as {\it q-th dual quermassintegral} of the dual body in classical convex geometry (e.g., \cite{Sch}, p. 508).  
When $\alpha=1$ and $f(x)\equiv 1$, then the above quantity agrees with the entropy in \cite{GN},  first introduced by Firey \cite{Firey} for the centrally symmetric $\Omega$. General integral quantities studied by Andrews in \cite{Andrews-IMRN, Andrews-pjm}.
Here we shall assume that $\oint_{\mathbb{S}^n} f(x)\, d\theta(x)=1$; namely,  $\frac1{\omega_n}\,f(x)d\theta(x)$ is a probability measure.  For the special case $f\equiv 1$, $\mathcal{E}_{\alpha, f}(\Omega) $ becomes the entropy $\mathcal{E}_\alpha(\Omega)$  in \cite{AGN16}.

\medskip

For positive $f\in C^{\infty}(\mathbb S^n)$, consider the anisotropic flow for convex hypersurfaces  $\tilde X(\cdot, \tau): M_{\tau}\to \mathbb{R}^{n+1}$:
\begin{equation}\label{eq:gcf-f-un}
\frac{\partial}{\partial \tau}\tilde X(x, \tau)= -f^{\alpha} (\nu) \tilde K^{\alpha}(x, \tau)\, \nu(x, \tau),
\end{equation}
where $\nu(x, \tau)$ is the unit exterior normal at $\tilde X(x, \tau)$ of $\tilde M_\tau=\tilde X(M, \tau)$, and $\tilde K(x,\tau)$ is the Gauss curvature of $\tilde M_\tau$ at $\tilde X(x,\tau)$.  Andrews \cite{Andrews-pjm} proved that flow (\ref{eq:gcf-f-un}) contracts to a point under finite time if the initial hypersurface $M_0$ is strictly convex. Under a proper normalization, the normalized anisotropy flow of (\ref{eq:gcf-f-un}) is  
\begin{equation}\label{gcf-f-nor}
\frac{\partial}{\partial t}X(x, t)= -\frac{f^\alpha(\nu) K^{\alpha}(x, t)}{\oint_{\mathbb{S}^n}f^\alpha  K^{\alpha-1}}\, \nu(x, t) +X(x,t).
\end{equation} 

% The entropy functional so-defined can also be interpreted in terms of certain weighted volume of the polar dual body as demonstrated in \cite{AGN} and \cite{GN}. Flow (\ref{eq:gcf-f-un}) is a pseudo gradient flow of the entropy functional due to the associated monotonicity.

\medskip

The basic observation is that a critical point for entropy $\mathcal{E}_{\alpha, f} (\Omega)$ defined in (\ref{eq:def-e}) under volume normalization is a solution to equation (\ref{eq:min1}). The entropy is monotone along flow (\ref{gcf-f-nor}). One may view  (\ref{eq:min1}) is an "optimal solution" to this variational problem as the flow (\ref{gcf-f-nor}) provides a natural path to reach it. This approach was  devised in \cite{ABGN}  with the aim to obtain convergence of the normalized flow (\ref{gcf-f-nor}). The main arguments in \cite{ABGN} follows those in \cite{GN, AGN16} where convergence of isotropic flows by power of Gauss curvature (i.e., $f=1$) was established. Unfortunately, the entropy point estimate in \cite{GN, AGN16} fails for general anisotropic flows except $\frac{1}{n+2}<\alpha\le\frac{1}n$ (\cite{Andrews-pjm}). 
The convergence was obtained in \cite{ABGN} assuming $M_0$ and $f$ are invariant under a subgroup $G$ of $O(n+1)$ which has no fixed point. We note that an inverse Gauss curvature flow argument was considered by Bryan, Ivaki, Scheuer \cite{BIS19}  to produce a origin-symmetric solution to (\ref{eq:min1}). 

  \medskip
  
Since we are only interested in finding a weak solution to (\ref{eq:Alexandrov}), one only needs certain "weak" convergence of the flow (\ref{gcf-f-nor}).  The key steps are to control diameter with entropy under appropriate conditions on measure $\mu=f d\theta$ on $\mathbb S^n$ and use monotonicity of entropy to produce a solution to (\ref{eq:Alexandrov}). The following is our main result.

\begin{theorem}\label{mainT1} For  $\alpha>\frac1{n+2}$
and finite non-trivial Borel measure $\mu$ on $\mathbb{S}^n$, $n\geq 1$, 
there exists a weak solution of \eqref{eq:Alexandrov} provided the following holds:
\begin{description}
\item[(i)] If $\alpha>1$ and $\mu$ is not concentrated onto any great subsphere $x^\bot\cap \mathbb{S}^n$, 
$x\in \mathbb{S}^n$.

\item[(ii)] If $\alpha=1$ and $\mu$   satisfies  that
for any linear $\ell$-subspace $L\subset \R^{n+1}$ 
with $1\leq \ell\leq n$, we have
\begin{description}
\item[(a)] $\displaystyle 
\mu(L\cap \mathbb{S}^n)\leq  \frac{\ell}{n+1}\cdot \mu(\mathbb{S}^n)$;
\item[(b)] equality  in (a) for a  linear $\ell$-subspace $L\subset \R^{n+1}$ 
with $1\leq d\leq n$ implies the existence of a complementary linear $(n+1-\ell)$-subspace $\widetilde{L}\subset \R^{n+1}$
 such that
${\rm supp}\,\mu\subset L\cup \widetilde{L}$.
\end{description} 

\item[(iii)] If $\frac1{n+2}<\alpha<1$ and $d\mu=f\,d\theta$ for non-negative 
$f\in L^{\frac{n+1}{n+2-\frac1\alpha}}( \mathbb{S}^n)$ with $\int_{\mathbb{S}^n}f>0$. 

\end{description}

\end{theorem}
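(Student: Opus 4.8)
The plan is to construct the weak solution as a limit of rescaled smooth solutions obtained from the normalized anisotropic flow \eqref{gcf-f-nor}, using the entropy $\mathcal{E}_{\alpha,f}$ as the key compactness tool. First I would reduce to the case of a positive smooth density: approximate the measure $\mu$ by measures $d\mu_k=f_k\,d\theta$ with $f_k\in C^\infty(\mathbb{S}^n)$ positive and $\oint f_k\,d\theta=1$, chosen so that $\mu_k\to\mu$ weakly and, crucially, so that the relevant non-degeneracy hypothesis of (i), (ii) or (iii) passes to the $f_k$ in a quantitative, uniform way (for (iii) one keeps the $L^{(n+1)/(n+2-1/\alpha)}$-norms bounded; for (i) one keeps a uniform lower bound on $\min_x\mu_k(\mathbb{S}^n\setminus\{$hemisphere around $x\})$; for (ii) a uniform version of the subspace concentration condition). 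For each $k$, run the normalized flow \eqref{gcf-f-nor} starting from a round sphere; by Andrews \cite{Andrews-pjm} the unnormalized flow \eqref{eq:gcf-f-un} exists until it contracts to a point, so the normalized flow \eqref{gcf-f-nor} exists for all $t\ge 0$, and the entropy $\mathcal{E}_{\alpha,f_k}(\Omega_t)$ is monotone (non-increasing, after fixing the sign convention coming from the volume normalization) along the flow.

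Next I would extract, for each $k$, a sequence of times $t_j\to\infty$ along which the bodies $\Omega^{(k)}_{t_j}$ converge. Monotonicity of the entropy gives an upper bound $\mathcal{E}_{\alpha,f_k}(\Omega^{(k)}_t)\le \mathcal{E}_{\alpha,f_k}(B)$ uniformly in $t$ and (by the approximation) uniformly in $k$. The heart of the compactness argument is a diameter bound: under the non-degeneracy hypothesis on $\mu_k$, an entropy upper bound forces a diameter upper bound $\operatorname{diam}\Omega^{(k)}_t\le C$, and the fixed volume normalization forces an inradius lower bound; together these confine the bodies (after suitable translation, using the $\sup_{z\in\Omega}$ in the definition of $\mathcal{E}_{\alpha,f}$) to a compact family in the Hausdorff metric. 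This is the step I expect to be the main obstacle, and it is where the three cases genuinely differ: for $\alpha>1$ one estimates $\oint u_z^{1-1/\alpha}f\,d\theta$ from below by integrating over a cap where $u_z$ is comparable to the diameter, using that $\mu$ charges a definite amount of mass away from any hemisphere; for $\alpha=1$ one needs the logarithmic-entropy estimate of \cite{BLYZ13}-type showing the subspace concentration condition controls $\oint\log u_z\,d\mu$ from below; for $\frac1{n+2}<\alpha<1$ one uses Hölder with the conjugate exponent $\frac{n+1}{n+1+p}=\frac{n+1}{n+2-1/\alpha}$ to absorb the (possibly vanishing) density $f$ against a power of $u_z$ that is integrable precisely because $1-\frac1\alpha>-1$ is equivalent to $p>-1$... more carefully, because the geometric integrability exponent of $u_z^{1-1/\alpha}$ against $dS_\Omega$ is exactly what the stated $L^q$ hypothesis is tuned to.

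Finally I would identify the limit. For fixed $k$, along the converging time sequence the normalized flow equation \eqref{gcf-f-nor} together with the standard fact that $\frac{d}{dt}\mathcal{E}_{\alpha,f_k}$ equals (a sign times) an integral that vanishes only at stationary points shows that any subsequential limit $\Omega^{(k)}_\infty$ is a weak (Alexandrov) solution of $d\mu_k=u^{1/\alpha}\,dS_{\Omega^{(k)}_\infty}$; equivalently one argues directly that $\mathcal{E}_{\alpha,f_k}$ attains its infimum over bodies of fixed volume at $\Omega^{(k)}_\infty$ and that a minimizer satisfies the Euler--Lagrange equation \eqref{eq:Alexandrov}, which is the observation recorded before the statement of the theorem. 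Then let $k\to\infty$: the bodies $\Omega^{(k)}_\infty$ lie in a fixed compact family, so a subsequence converges in the Hausdorff metric to some convex body $\Omega$ with $o\in\Omega$; the surface area measures $S_{\Omega^{(k)}_\infty}$ converge weakly to $S_\Omega$, the support functions converge uniformly, and one checks that $u^{1/\alpha}\,dS_{\Omega^{(k)}_\infty}\to u^{1/\alpha}\,dS_\Omega$ weakly (the only delicate point being to rule out escape of mass near $\{u=0\}$, handled again by the uniform entropy/diameter bounds). Since the left sides converge to $\mu$ by construction, $\Omega$ solves \eqref{eq:Alexandrov}, completing the proof.
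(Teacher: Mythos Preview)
Your overall architecture matches the paper's: approximate $\mu$ by smooth positive $f_k$, run the normalized flow, use entropy monotonicity plus a diameter estimate (Theorem~\ref{f-diamests}) to get compactness, pass to a limit body for each $k$, and then let $k\to\infty$. Two genuine gaps remain, however.

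\textbf{The passage from ``entropy defect $\to 0$'' to ``limit solves the equation'' is not standard.} You write that ``$\frac{d}{dt}\mathcal{E}$ equals an integral that vanishes only at stationary points, so any subsequential limit is a weak solution,'' and as an alternative that the limit minimizes the entropy and hence satisfies the Euler--Lagrange equation. Neither follows directly. Along the chosen times $t_k$ one only has Hausdorff convergence $\Omega_{t_k}\to\Omega$ with no curvature control, so one cannot simply pass to the limit in $u^{1/\alpha}\det(\bar\nabla^2 u+u\bar g)=f$. Nor does the paper establish that $\Omega$ is an entropy minimizer. What the paper actually proves (Lemma~\ref{cov-e}(c), Lemma~\ref{holder room}, Proposition~\ref{k-weak-conv}) is that from
\[
\frac{\oint h^{\alpha+1}\,d\sigma_{t_k}}{\oint h\,d\sigma_{t_k}\cdot\oint h^{\alpha}\,d\sigma_{t_k}}\;\longrightarrow\;1
\]
one can extract, via a \emph{quantitative} H\"older inequality with explicit remainder, the $L^1$ convergence $\oint_{\mathbb{S}^n}\bigl|u_k^{1/\alpha}\sigma_{n,k}-f/\eta\bigr|\,d\theta\to 0$. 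This refined H\"older step (Lemma~\ref{holder room}) is the technical heart of Section~\ref{sec:Flow} and is what upgrades the soft ``defect $\to 0$'' information to convergence of the surface area densities; your sketch does not supply it.

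\textbf{Case $\alpha=1$ with equality in the subspace concentration condition.} Your compactness argument presupposes a \emph{strict} (uniform) version of the subspace condition, which you pass to the $f_k$. But Theorem~\ref{mainT1}(ii) allows equality $\mu(L\cap\mathbb{S}^n)=\frac{\ell}{n+1}\mu(\mathbb{S}^n)$ provided $\operatorname{supp}\mu\subset L\cup\widetilde L$. In that situation the approximating $f_k$ need not satisfy any uniform strict inequality, and the diameter bound of Theorem~\ref{f-diamests}(ii) is unavailable. The paper handles this separately (Case~2.1) by an induction on dimension: one solves the problem on $L$ and on $\widetilde L$ in lower dimension and assembles the solution as a Minkowski sum of cylinders. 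Your proposal omits this branch entirely.

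A smaller point: in case (iii) the integrability of $u^{1-1/\alpha}$ is not a consequence of ``$p>-1$'' (indeed $p$ ranges over all of $(-n-1,0)$); it comes from the Blaschke--Santal\'o inequality $\oint u^{-(n+1)}\le 1$, combined with H\"older at the exponent $\frac{n+1}{n+1+p}$, which is exactly how the paper obtains the diameter bound in Theorem~\ref{f-diamests}(iii).
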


Let us briefly discuss what is known about uniqueness of the solution of the $L_p$-Minkowski problem \eqref{eq:Alexandrovp}. If $p>1$ and $p\neq n$, then Hug, Lutwak, Yang, Zhang \cite{HLYZ2} proved that the Alexandrov solution of 
the $L_p$-Minkowski problem \eqref{eq:Alexandrovp} is unique. However, if $p<1$, then
the solution of the $L_p$-Minkowski problem \eqref{eq:Mongep} may not be unique even if $f$ is positive and continuous.
Examples are provided by Chen, Li, Zhu \cite{CLZ17,CLZ19} if $p\in[0,1)$, and Milman \cite{Mila} shows that for any $C\in\mathcal{K}_{(0)}$, one finds $q\in (-n,1)$ such that if $p<q$, then there exist multiple solutions to
the $L_p$-Minkowski problem \eqref{eq:Alexandrovp} with $\mu=S_{C,p}$; or in other words, there exists 
$K\in\mathcal{K}_{(0)}$ with $K\neq C$ and $S_{K,p}=S_{C,p}$. In addition, 
Jian, Lu, Wang \cite{JLW15} and Li, Liu, Lu \cite{LLL22} prove that for any $p<0$, there exists positive even $C^\infty$ function $f$ with rotational symmetry
such that the $L_p$-Minkowski problem \eqref{eq:Mongep} has multiple positive  even $C^\infty$ solutions.
We note that in the case of the centro-affine Minkowski problem $p=-n$, Li \cite{Li19} even verified the possibility of existence of infinitely many solutions without affine equivalence, and Stancu \cite{Sta22} 
related unique solution in the cases $p=0$ and $p=-n$.

The case when $f$ is a constant function in the $L_p$-Minkowski problem \eqref{eq:Mongep} has received a special attention
since Firey \cite{Firey}. When $p=-(n+1)$,  \eqref{eq:Mongep} is self-similar solution of affine curvature flow.  It's proved by Andrews that all solutions are centered ellipsoids. If $n=2, \ p=2$, the uniqueness was proved by Andrews \cite{And99}.  For general $n$ and $p>-(n+1)$, through the work of Lutwak \cite{Lut93a}, Guan-Ni \cite{GN} and Andrews, Guan, Ni \cite{AGN16},  Brendle, Choi, Daskalopoulos \cite{BCD17} finally classified that the only solutions are centered balls.
See also Crasta, Fragal\'a \cite{CrF}, Ivaki, Milman \cite{IvM} and Saroglou \cite{Sar22} for other approaches.
 Stability versions of these results have been obtained by Ivaki \cite{Iva22}, but still no stability version is known in the case $p\in[0,1)$ if we allow any solutions of \eqref{eq:Mongep} not only even ones.

Concerning recent versions of the $L_p$-Minkowski problem, see B\"or\"oczky \cite{Bor}.

The paper is structured as follows: The required diameter bounds are dicussed in Section~\ref{sec:Diameter}.
Section~\ref{sec:Entropy} verifies the main properties of the Entropy, 
Section~\ref{sec:Flow} proves our main result Theorem~\ref{MongeAmpere-with-entropy} about flows,
and finally Theorem~\ref{mainT1} is proved in Section~\ref{sec:main} via weak approximation.

%In the case $n\le p<n+2$, the assumption of group invariant is not imposted.

%With control of diameter by entropy, we can analyze the solitons to yield solutions to $L_p$-Minkowski problem.

%{\red literature discussion...}

%We denote $f\in L^{1_+}(\mathbb{S}^n)$ if there exists $\epsilon>0$ such that $f\in L^{1+\epsilon}(\mathbb{S}^n)$.

\section{Entropy and diameter estimates}
\label{sec:Diameter}

For $\delta\in[0,1)$  and linear $i$-subspace $L$ of $\R^{n+1}$ with $1\leq{\rm dim}\,L\leq n$, we consider the collar
$$
\Psi(L\cap \mathbb{S}^n,\delta)=\{x\in \mathbb{S}^n:\langle x,y\rangle\leq \delta\mbox{ for }y\in L^\bot\cap \mathbb{S}^n\}.
$$
Let $B(1)\subset \R^{n+1}$ be the unit ball centered at the origin.

\begin{theorem}\label{f-diamests}
Let $\alpha>\frac1{n+2}$,  let $\oint_{\mathbb{S}^n}f=1$  for a bounded measurable function $f$ on $\mathbb{S}^n$ with 
$\inf f>0$, and let
$\Omega\subset\R^{n+1}$ be a convex body such that $|\Omega|=|B(1)|$ and
${\rm diam}\, \Omega= D$. For any $\delta,\tau\in(0,1)$, we have
\begin{description}
\item[(i)]
if $\alpha>1$, and 
$\oint_{\Psi(z^\bot\cap \mathbb{S}^n,\delta)}f\leq 1-\tau$ for any $z\in S^n$, then
$$
\exp\left(\frac{\alpha-1}{\alpha}\,\mathcal{E}_{\alpha, f} (\Omega)\right)
\geq \gamma_1 \tau\delta^{1-\frac1\alpha}D^{1-\frac1\alpha}
$$
where $\gamma_1>0$ depends on $n$ and $\alpha$;
\item[(ii)] if $\alpha=1$, and 
$$
\oint_{\Psi(L\cap \mathbb{S}^n,\delta)}f< \frac{(1-\tau)i}{n+1}
$$
for any linear $i$-subspace $L$ of $\R^{n+1}$, $i=1,\ldots,n$, then
$$
\mathcal{E}_{1, f} (\Omega)\geq\tau\log D +\log\delta-4\log(n+1);
$$

\item[(iii)] if $\frac1{n+2}<\alpha<1$, $p=1-\frac1\alpha$ (where $-n-1<p<0$),
$\tau\leq\frac12\oint_{\mathbb{S}^n}f\cdot u^{1-\frac1\alpha}$
 and
\begin{equation}
\label{eq:f-diamestsiii}
\oint_{\Psi(z^\bot\cap \mathbb{S}^n,\delta)}f^{\frac{n+1}{n+1+p}}\leq 
\tau^{\frac{n+1}{n+1+p}} 
\end{equation}
for  any $z\in S^{n-1}$, 
then 
$$
\mbox{either $D\leq 16n^2/\delta^2$, \ \ or \ }
D\leq \left(\frac12\oint_{\mathbb{S}^n}f\cdot u^{1-\frac1\alpha}\right)^{\frac2{p}}.
$$
Moreover, if $\tau\leq \frac12\exp\left(\frac{\alpha-1}{\alpha}\,\mathcal{E}_{\alpha, f} (\Omega)\right)$, then
$$
\mbox{either $D\leq 16n^2/\delta^2$, \ \ or \ }
D\leq \left(\frac12\exp\left(\frac{\alpha-1}{\alpha}\,\mathcal{E}_{\alpha, f} (\Omega)\right)\right)^{\frac2{p}}.
$$
\end{description}
On the other hand, for any  $\alpha>\frac1{n+2}$,  bounded $f$ with $\inf f>0$ and $\oint_{\mathbb{S}^n}f=1$, and
$\tau\in(0,1)$, 
 there exists $\delta\in(0,1)$ such that conditions in (i), (ii) and (iii) hold
provided $\tau\leq \frac12\exp\left(\frac{1-\alpha}{\alpha}\,\mathcal{E}_{\alpha, f} (\Omega)\right)$
for the convex body  $\Omega\subset\R^{n+1}$ in the case of (iii).
\end{theorem}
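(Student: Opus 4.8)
The starting point is that $\mathcal{E}_{\alpha, f}(\Omega)=\sup_{z\in\Omega}\mathcal{E}_{\alpha, f}(\Omega,z)$, so that, writing $I_z:=\oint_{\mathbb{S}^n}u_z^{1-1/\alpha}f\,d\theta$, one has $\exp(\tfrac{\alpha-1}{\alpha}\mathcal{E}_{\alpha,f}(\Omega))=\sup_{z\in\Omega}I_z$ when $\alpha>1$ and $=\inf_{z\in\Omega}I_z$ when $\tfrac1{n+2}<\alpha<1$ (the prefactor $\tfrac{\alpha}{\alpha-1}$ changes sign, which interchanges the roles of sup and inf), while $\mathcal{E}_{1,f}(\Omega)=\sup_z\oint_{\mathbb{S}^n}\log u_z\,f\,d\theta$. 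Each assertion thus reduces to choosing one good center $z$ and estimating $I_z$. The geometric input is uniform: if $D=\mathrm{diam}\,\Omega$ is large while $|\Omega|=|B(1)|$, then for a longest chord $[p,q]$ with direction $e=\tfrac1D(q-p)$ one has $u_z(x)\ge\tfrac D2|\langle x,e\rangle|$ for every $z\in\Omega$, so $u_z$ is large off the collar $\Psi(e^\bot\cap\mathbb{S}^n,\delta)$, and the hypotheses in (i)--(iii) are precisely the bounds that prevent $f$ from concentrating on such collars.

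For (i), where $1-\tfrac1\alpha>0$, take $z=\tfrac12(p+q)$; then $u_z^{1-1/\alpha}\ge(\tfrac{D\delta}{2})^{1-1/\alpha}$ on $\mathbb{S}^n\setminus\Psi(e^\bot\cap\mathbb{S}^n,\delta)$, whose $f$-mass is at least $1-(1-\tau)=\tau$ by the hypothesis applied with $z=e$, so $I_z\ge\tau(\tfrac{D\delta}{2})^{1-1/\alpha}$, which is the claim with $\gamma_1=2^{-(1-1/\alpha)}$. For (ii) I would instead invoke the John ellipsoid $E\subseteq\Omega\subseteq c+(n+1)(E-c)$ with semi-axes $a_1\ge\cdots\ge a_{n+1}>0$ along an orthonormal frame $e_1,\dots,e_{n+1}$ and flag $L_k=\mathrm{span}(e_1,\dots,e_k)$; taking $z=c$, one has $u_\Omega^c\ge u_E^c$ with $(u_E^c(x))^2\ge a_k^2\|\mathrm{proj}_{L_k}x\|^2$, so after partitioning $\mathbb{S}^n$ by the least index $m(x)$ with $\|\mathrm{proj}_{L_{m(x)}}x\|>\delta$ one gets $\log u_E^c(x)\ge\log a_{m(x)}+\log\delta$. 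Setting $\mu_k=\oint_{\{m=k\}}f$, the identity $\{m\le k\}=\{\|\mathrm{proj}_{L_k}x\|>\delta\}$ gives $\sum_{j\le k}\mu_j=1-\oint_{\Psi(L_k^\bot\cap\mathbb{S}^n,\delta)}f>\tfrac{k+\tau(n+1-k)}{n+1}$ by hypothesis (a) applied to the $(n+1-k)$-subspace $L_k^\bot$; Abel summation then telescopes $\sum_k\mu_k\log a_k$ to exactly $\tau\log a_1+\tfrac{1-\tau}{n+1}\log\prod_i a_i$, and combining $a_1\ge\tfrac{D}{2(n+1)}$ with $\prod_i a_i\ge(n+1)^{-(n+1)}$ (from $|\Omega|=|B(1)|$ and $|\Omega|\le(n+1)^{n+1}|E|$) yields $\mathcal{E}_{1,f}(\Omega)\ge\oint\log u_E^c\,f\ge\tau\log D+\log\delta-\log(2(n+1)^2)\ge\tau\log D+\log\delta-4\log(n+1)$.

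For (iii), where $p:=1-\tfrac1\alpha\in(-n-1,0)$, the integrand $u_z^p$ blows up where $u_z$ is small, so large diameter need not inflate the entropy and one instead bounds $D$ from above. The key choice is $z=$ the \emph{Santal\'o point} of $\Omega$, for which $|\Omega|=|B(1)|$ and the Blaschke--Santal\'o inequality give $\oint_{\mathbb{S}^n}u_z^{-(n+1)}\,d\theta=\tfrac{n+1}{\omega_n}|\Omega^{z}|\le1$. Splitting $\mathbb{S}^n=\Psi\cup(\mathbb{S}^n\setminus\Psi)$ with $\Psi=\Psi(e^\bot\cap\mathbb{S}^n,\delta)$: on $\mathbb{S}^n\setminus\Psi$ one has $u_z>\tfrac{D\delta}2$, hence $\oint_{\mathbb{S}^n\setminus\Psi}u_z^pf\le(\tfrac{D\delta}2)^p$; on $\Psi$ apply H\"older with exponent $q=\tfrac{n+1}{n+1+p}$, the very exponent appearing in \eqref{eq:f-diamestsiii}, whose conjugate is $q'=-\tfrac{n+1}{p}$, so that $pq'=-(n+1)$ and $\oint_\Psi u_z^pf\le(\oint_\Psi f^{q})^{1/q}(\oint_\Psi u_z^{-(n+1)})^{1/q'}\le\tau$. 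Therefore $\oint_{\mathbb{S}^n}u_z^pf\le\tau+(\tfrac{D\delta}2)^p$; inserting $\tau\le A:=\tfrac12\oint u_z^pf$ gives $A\le(\tfrac{D\delta}2)^p$, i.e.\ $D\le\tfrac2\delta A^{1/p}$, and the alternative $D\le16n^2/\delta^2$ versus $D>16n^2/\delta^2$ (which forces $A^{1/p}>8n^2/\delta\ge2/\delta$, hence $\tfrac2\delta A^{1/p}\le A^{2/p}$) upgrades this to $D\le A^{2/p}$. The entropy form is immediate because $\exp(\tfrac{\alpha-1}{\alpha}\mathcal{E}_{\alpha,f}(\Omega))=\inf_{z'}\oint u_{z'}^pf\le\oint u_z^pf$ for the Santal\'o point $z$.

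For the final assertion, $\theta(\Psi(L\cap\mathbb{S}^n,\delta))$ depends only on $\dim L$ by $O(n+1)$-invariance and tends to $0$ as $\delta\to0$; since $f$ is bounded, $\oint_{\Psi(L\cap\mathbb{S}^n,\delta)}f$ and $\oint_{\Psi(L\cap\mathbb{S}^n,\delta)}f^{q}$ then tend to $0$ uniformly over the finitely many dimensions that occur, so for $\delta$ small (depending only on $n$, $\alpha$, $\tau$ and $\sup f$) the concentration requirements in (i), (ii) and the integral condition \eqref{eq:f-diamestsiii} all hold, while the remaining $\tau$/entropy requirement in the case of (iii) is exactly the stated proviso $\tau\le\tfrac12\exp(\tfrac{1-\alpha}{\alpha}\mathcal{E}_{\alpha,f}(\Omega))$. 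The step I expect to be most delicate is the Abel-summation bookkeeping in (ii): one must verify that chaining the subspace-concentration inequalities across \emph{all} dimensions $1,\dots,n$ telescopes so as to leave precisely the coefficient $\tau$ on $\log a_1$ (the coefficients $\sigma_k-\sigma_{k-1}$ must all collapse to $\tfrac{1-\tau}{n+1}$); in (iii) the corresponding point is the numerology that the H\"older conjugate of the exponent in \eqref{eq:f-diamestsiii} is exactly the power $-(n+1)$ controlled by Blaschke--Santal\'o, so that the $L^{q}$-bound on $f$ over collars is converted into an $O(\tau)$-bound on $\oint u_z^pf$ there.
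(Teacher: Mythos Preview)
Your arguments for (i) and (ii) are correct and close in spirit to the paper's: you use the midpoint of a diameter in (i) and the John ellipsoid in (ii), whereas the paper places the centroid at the origin and uses the Kannan--Lov\'asz--Simonovits ellipsoid for both, but the partition-and-Abel-summation mechanism in (ii) is essentially the same.

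There is, however, a genuine gap in (iii). Your opening claim that $u_z(x)\ge \tfrac{D}{2}\,|\langle x,e\rangle|$ holds ``for every $z\in\Omega$'' is false: from $p,q\in\Omega$ one only gets
\[
u_z(x)\ \ge\ \max\bigl(\langle p-z,x\rangle,\ \langle q-z,x\rangle\bigr)\ =\ \tfrac12\bigl(\langle p+q-2z,x\rangle+D\,|\langle e,x\rangle|\bigr),
\]
which is $\ge \tfrac{D}{2}|\langle e,x\rangle|$ only when $\langle p+q-2z,x\rangle\ge 0$; this holds for all $x$ precisely when $z$ is the midpoint of $[p,q]$. In particular it is \emph{not} available for the Santal\'o point, so your estimate $\oint_{\mathbb{S}^n\setminus\Psi}u_z^{\,p}f\le(\tfrac{D\delta}{2})^{p}$ is unjustified. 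And you cannot simply switch to the midpoint $m$, because the bound $\oint_{\mathbb{S}^n}u_m^{-(n+1)}\le 1$ that you need for the H\"older step on the collar is exactly Blaschke--Santal\'o, which holds for the Santal\'o point (or the centroid) but not for an arbitrary interior point like $m$.

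The paper resolves this tension by working at the centroid throughout: from the Kannan--Lov\'asz--Simonovits containment $E\subset\Omega\subset(n+1)E$ one gets $-\Omega\subset(n+1)\,\Omega$, hence $\pm\tfrac{R}{n+1}\,z_0\in\Omega$ for the farthest point $Rz_0\in\partial\Omega$, and so $u(x)\ge\tfrac{R}{n+1}|\langle z_0,x\rangle|$. With this (weaker) two-sided lower bound in hand, the paper does \emph{not} split by the collar but by the level set $\{u\le\sqrt{2R}\}$ versus $\{u>\sqrt{2R}\}$: on the former set the two-sided bound forces $|\langle x,z_0\rangle|\le (n+1)\sqrt{2/R}\le\delta$ once $D\ge 16n^2/\delta^2$, so it lies inside the collar and the H\"older/Blaschke--Santal\'o estimate gives $\le\tau$; on the latter set $u^{\,p}\le(2R)^{p/2}\le D^{p/2}$ and integrating against $f$ contributes $\le D^{p/2}$. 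Combining gives $\oint f\,u^{\,p}\le D^{p/2}+\tau$, from which the stated dichotomy follows. Your collar-splitting can be repaired along the same lines by taking $z$ to be the centroid (where Blaschke--Santal\'o is still valid) and absorbing the extra factor $n+1$ into the threshold $16n^2/\delta^2$, but as written the Santal\'o-point argument does not go through.
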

\begin{proof} Given $\alpha>\frac1{n+2}$,  bounded $f$ with $\inf f>0$ and $\oint_{\mathbb{S}^n}f=1$, and
$\tau\in(0,1)$,  the existence of suitable $\delta\in(0,1)$ follows from the fact that the Lebesgue measure is a Borel measure.

Now we assume that the conditions in (i), (ii) and (iii) hold. 
We may assume that the centroid of $\Omega$ is the origin, thus Kannan, Lov\'asz, Simonovics \cite{KLS95} yields the existence of
an $o$-symmetric ellipsoid such that
\begin{equation}
\label{centroid-ellipsoid}
E\subset\Omega\subset (n+1)E,\mbox{ and hence  }-\Omega\subset (n+1)\Omega.
\end{equation}
Let $u$ be the support function of $\Omega$, and let
$R=\max\{\|y\|:\,y\in\Omega\}\geq D/2$ and  $z_0\in \mathbb{S}^n$ such that $Rz_0\in \partial \Omega$. We observe that the definition of the entropy yields
\begin{align*}
%\label{entrointesti}
\oint_{\mathbb{S}^n}fu^{1-\frac1\alpha}&\leq \exp\left(\frac{1-\alpha}{\alpha}\,\mathcal{E}_{\alpha, f} (\Omega)\right)
\mbox{ \ if $\alpha>1$};\\
%\label{entrointestii}
\oint_{\mathbb{S}^n}f\log u&\leq \mathcal{E}_{0, f} (\Omega);\\
%\label{entrointestiii}
\oint_{\mathbb{S}^n}fu^{1-\frac1\alpha}&\geq \exp\left(\frac{1-\alpha}{\alpha}\,\mathcal{E}_{\alpha, f} (\Omega)\right)
\mbox{ \ if $\frac1{n+2}<\alpha<1$}
\end{align*}

\noindent {\bf Case 1} $\alpha>1$

According to the condition in (i), we may choose $\zeta\in\{+1,-1\}$ such that
$$
\oint_{\Phi}f\geq \frac{\tau}2
\mbox{ \ for }\Phi=\{x\in \mathbb{S}^n:\langle x,\zeta z_0\rangle>\delta\},
$$
and hence $\frac{R\zeta z_0}{n+1}\in \Omega$ by \eqref{centroid-ellipsoid}.
Since $u_\sigma(x)\geq \langle \frac{R\zeta z_0}{n+1},x\rangle\geq \frac{R\delta}{n+1}$ for $x\in\Phi$, we have
$$
\oint_{\mathbb{S}^n}fu^{1-\frac1\alpha}\geq\oint_{\Phi}f\left(\frac{R\delta}{n+1}\right)^{1-\frac1\alpha}\geq
\frac{\tau}2\cdot\left(\frac{D\delta}{2(n+1)}\right)^{1-\frac1\alpha}.
$$

\noindent {\bf Case 2 } $\alpha=1$

To simplify notation, we consider the Borel probability measure
 $\mu(A)=\oint_Af$ on $S^n$.
Let $e_1,\ldots,e_{n+1}\in \mathbb{S}^n$ be the principal directions associated to the ellipsoid $E$ in 
\eqref{centroid-ellipsoid}, and let $r_1,\ldots,r_{n+1}>0$
be the half axes of $E$ with $r_ie_i\in\partial E$ where we may assume that $r_1\leq\ldots\leq r_{n+1}$. In particular,
\eqref{centroid-ellipsoid} yields that 
\begin{equation}
\label{QK}
(n+1)^{n+1}\prod_{i=1}^{n+1} r_i=\frac{|(n+1)E|}{|B(1)|}\geq \frac{|\Omega|}{|B(1)|}=1.
%\geq \frac{V(E)}{V(B(1))}=\prod_{i=1}^{n+1} r_i.
\end{equation}

We observe that for any $v\in \mathbb{S}^n$ , there exists $e_i$ such that
$|\langle v,e_i\rangle|\geq \frac{1}{\sqrt{n+1}}> \frac{\delta}{n+1}$.
For $i=1,\ldots,n+1$, we define
$$
B_i=\left\{v\in \mathbb{S}^n:|\langle v,e_i\rangle|\geq \frac{\delta}{n+1}\mbox{ and }
|\langle v,e_j\rangle|<\frac{\delta}{n+1}\mbox{ for }j>i\right\}.
$$
In particular, $B_i\subset \Psi(L_i\cap \mathbb{S}^n,\delta)$ for $i=1,\ldots,n$ and
$L_i={\rm lin}\{e_1,\ldots,e_i\}$.

It follows that $\mathbb{S}^n$ is partitioned into the Borel sets $B_1,\ldots,B_{n+1}$, and
as $B_i\subset \Psi(L_i\cap \mathbb{S}^n,\delta)$ for $i=1,\ldots,n$, we have
\begin{eqnarray}
\label{muAi}
\mu(B_1)+\ldots+\mu(B_i)&\leq &\frac{i(1-\tau)}{n+1}\mbox{ \ for $i=1,\ldots,n$}\\
\label{muAn}
\mu(B_1)+\ldots+\mu(B_{n+1})&=&1.
\end{eqnarray}
For $\zeta=\frac{1-\tau}{n+1}$, we have $0< \zeta<\frac1{n+1}$, and define
\begin{eqnarray}
\label{betai}
\beta_i&= &\mu(B_i)-\zeta\mbox{ \ for $i=1,\ldots,n$}\\
\label{betan}
\beta_{n+1}&=&\mu(B_{n+1})-\zeta-\tau
\end{eqnarray}
where (\ref{muAi}) and (\ref{muAn}) yield
\begin{eqnarray}
\label{sumbetai}
\beta_1+\ldots+\beta_i&\leq &0\mbox{ \ for $i=1,\ldots,m-1$}\\
\label{sumbetan}
\beta_1+\ldots+\beta_{n+1}&=&0.
\end{eqnarray}
As $r_ie_i\in \Omega$, it follows from  the definition of $B_i$
 that
$u(x)\geq \langle x,r_ie_i\rangle\geq r_i\cdot\frac{\delta}{n+1}$
for $x\in B_i$, $i=1,\ldots,n+1$.
We deduce from applying (\ref{QK}), (\ref{muAn}),  (\ref{betai}), (\ref{betan}),
(\ref{sumbetai}), (\ref{sumbetan}), $r_1\leq\ldots\leq r_{n+1}$
and $\zeta<\frac1{n+1}$
that
\begin{eqnarray*}
\int_{\mathbb{S}^n}\log u\,d\mu&= &
\sum_{i=1}^{n+1}\int_{B_i}\log u\,d\mu\\
&\geq &\sum_{i=1}^{n+1}\mu(B_i)\log r_i+\sum_{i=1}^{n+1}\mu(B_i)\log \frac{\delta}{n+1}
=  \sum_{i=1}^{n+1}\mu(B_i)\log r_i+\log \frac{\delta}{n+1}\\
&=&
\sum_{i=1}^{n+1}\beta_i\log r_i+\sum_{i=1}^{n+1}\zeta \log r_i
+\tau\log r_{n+1}+\log \frac{\delta}{n+1}\\
&\geq &
\sum_{i=1}^{n+1}\beta_i\log r_i+\zeta\log \frac{1}{(n+1)^{n+1}}
+\tau\log r_{n+1}+\log \frac{\delta}{n+1}\\
&= &
(\beta_1+\ldots+\beta_{n+1})\log r_{n+1}+
\sum_{i=1}^{n}(\beta_1+\ldots+\beta_i)(\log r_i-\log r_{i+1})\\
&& -(n+1)\zeta\log (n+1)
+\tau\log r_{n+1}+\log \frac{\delta}{n+1}\\
&\geq &-\log (n+1)+\tau\log r_{n+1}+\log \frac{\delta}{n+1}.
\end{eqnarray*}
 Now $D\leq (n+1){\rm diam}\,E=2(n+1)r_{n+1}\leq (n+1)^2r_{n+1}$ and $\tau<1$, and hence
\begin{eqnarray*}
-\log (n+1)+\tau\log r_{n+1}+\log \frac{\delta}{n+1}&\geq & -\log (n+1)+\tau\log \frac{D}{(n+1)^2}
 +\log \frac{\delta}{n+1}\\
&=& \log\left(\delta D^\tau\right)-(2+2\tau)\log (n+1)\\
&\geq&  \tau\log D +\log\delta-4\log(n+1).
\end{eqnarray*}
In particular, we conclude that
$$
\mathcal{E}_{1, f} (\Omega)\geq  \oint_{\mathbb{S}^n}f\log u  =\int_{\mathbb{S}^n}\log u\,d\mu \geq\tau\log D +\log\delta-4\log(n+1).
$$

\noindent {\bf Case 3} $\frac1{n+2}<\alpha<1$

In this case, $-(n+1)<1-\frac1\alpha<0$.
We may assume that
$$
D\geq 16n^2/\delta^2,
$$
and we consider
\begin{eqnarray*}
\Phi_0&=&\left\{x\in \mathbb{S}^n:\,u(x)> \sqrt{2R}\right\};\\
\Phi_1&=&\left\{x\in \mathbb{S}^n:\,u(x)\leq  \sqrt{2R}\right\}.
\end{eqnarray*}
Concerning $\Phi_0$, we have
\begin{equation}
\label{Phi0est}
\oint_{\Phi_0}f\cdot u^{1-\frac1\alpha}\leq (2R)^{\frac12(1-\frac1\alpha)}\int_{\Phi_0}f\leq
D^{\frac12(1-\frac1\alpha)}=D^{\frac{p}2}.
\end{equation}
On the other hand, we have $\pm \frac{R}{(n+1)}\,z_0\in\Omega$ by \eqref{centroid-ellipsoid}, thus any 
$x\in\Phi_1$ satisfies
$$
\sqrt{2R}\geq u(x)\geq \left|\left\langle x,\frac{R}{n+1}\,z_0\right\rangle\right|,
$$
and hence $|\langle x,z_0\rangle|\leq (n+1)\sqrt{\frac{2}{R}}\leq \frac{4n}{\sqrt{D}\leq \delta}$; or in other words,
$$
\Phi_1\subset \Psi(z_{0}^\bot\cap \mathbb{S}^n,\delta).
$$
It follows from $|\Omega|=|B(1)|$ and the Blaschke-Santal\'o inequality ({\it cf. } Schneider \cite{Sch14}) that
$$
\int_{\mathbb{S}^n} u^{-(n+1)}\leq (n+1)|B(1)|=\omega_{n},\mbox{ \ and hence \ }
\oint_{\mathbb{S}^n} u^{-(n+1)}\leq 1.
$$
For $p=1-\frac1\alpha\in(-n-1,0)$,    H\"older's inequality and $\int_{\Phi_1}f^{\frac{n+1}{n+1+p}}<
\tau^{\frac{n+1}{n+1+p}}$  yield
$$
\oint_{\Phi_1}f\cdot u^{1-\frac1\alpha}\leq
\left(\oint_{\Phi_1}f^{\frac{n+1}{n+1+p}}\right)^{\frac{n+1+p}{n+1}}
\left(\oint_{\Phi_1} u_\sigma^{-(n+1)}\right)^{\frac{|p|}{n+1}}\leq
\left(\oint_{\Phi_1}f^{\frac{n+1}{n+1+p}}\right)^{\frac{n+1+p}{n+1}}\leq
\tau.
$$
Finally, adding the last estimate to \eqref{Phi0est} yields 
$$
\exp\left(\frac{\alpha-1}{\alpha}\,\mathcal{E}_{\alpha, f} (\Omega)\right)\leq
 \oint_{\mathbb{S}^n}f\cdot u^{1-\frac1\alpha}\leq D^{\frac{p}2}+\tau,
$$
and hence the conditions either $\tau\leq \frac12\oint_{\mathbb{S}^n}f\cdot u^{1-\frac1\alpha}$ or 
$\tau\leq \frac12\exp\left(\frac{1-\alpha}{\alpha}\,\mathcal{E}_{\alpha, f} (\Omega)\right)$
on $\tau$ implies (iii).
\end{proof}

 \section{Anisotropic flows and monotonicity of entropies}
\label{sec:Entropy}

The following Theorem was proved by Andrews in \cite{Andrews-pjm} (see also for a discussion of contracting of non-homogeneous fully nonlinear anisotropic  curvature flows in \cite{GHL}). 

\begin{theorem}\label{andrewsT}   (Theorem 13 \cite{Andrews-pjm} ) For any $\alpha>0$ and positive $f\in C^{\infty}(\mathbb S^n)$ and any initial smooth, strictly convex hypersurface $\tilde M_0\subset \mathbb R^{n+1}$, the hypersurfaces $\tilde M_{\tau}$ given by the
solution of (\ref{eq:gcf-f-un}) exist for a finite time $T$ and converge in Hausdorff distance
to a point $p \in \mathbb R^{n+1}$ as $t$ approaches $T$. \end{theorem}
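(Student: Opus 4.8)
\emph{Sketch of proof (following Andrews \cite{Andrews-pjm}).} The plan is to rewrite \eqref{eq:gcf-f-un} as a scalar parabolic equation, run the standard continuation method for contracting curvature flows to get a solution on a maximal interval $[0,T)$, then establish $T<\infty$ by a volume argument and rule out a lower-dimensional limit. Since $\tilde M_0$ is smooth and strictly convex it is, for a short time, the image of the inverse Gauss map of the convex body $\tilde\Omega_\tau$ bounded by $\tilde M_\tau$, and writing $\tilde u(\cdot,\tau)$ for the support function of $\tilde\Omega_\tau$ one checks that \eqref{eq:gcf-f-un} is equivalent, after the usual tangential reparametrization, to
$$
\frac{\partial \tilde u}{\partial \tau}(x,\tau)=-f^\alpha(x)\,\big[\det\!\big(\bar\nabla^2_{ij}\tilde u+\tilde u\,\bar g_{ij}\big)\big]^{-\alpha},\qquad x\in\mathbb{S}^n .
$$
The matrix $W=\bar\nabla^2\tilde u+\tilde u\,\bar g$ of principal radii of curvature is positive definite, so the linearization $\alpha f^\alpha\tilde K^\alpha\,(W^{-1})^{ij}\big(\bar\nabla^2_{ij}\,\cdot+\bar g_{ij}\,\cdot\big)$ is uniformly elliptic while $\tilde M_\tau$ stays uniformly convex; hence short-time existence and uniqueness follow from standard quasilinear parabolic theory. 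Let $[0,T)$ be the maximal interval on which a smooth strictly convex solution exists.

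\textbf{A priori estimates up to $T$.} The flow moves every point inward, so the $\tilde\Omega_\tau$ form a nested decreasing family; this gives at once $\tilde u\le R_0$ and a uniform circumradius bound from $\tilde\Omega_\tau\subset\tilde\Omega_0\subset B_{R_0}$. Comparing, via the parabolic comparison principle, with round spheres centered at a fixed interior point $p$ of $\tilde\Omega_0$ and evolving under the flow with $f$ replaced by the constant $\max_{\mathbb{S}^n}f$, one obtains $B_{r(\tau)}(p)\subset\tilde\Omega_\tau$ with an explicit positive radius, hence a lower bound on the inradius away from the extinction time of the comparison ball. A Tso-type argument applied to the quotient of the speed $\tilde F=f^\alpha\tilde K^\alpha$ by the support function with respect to $p$, together with the evolution equation for $\tilde F$, gives an upper bound on $\tilde F$, and hence on $\tilde K$, that is uniform on every $[0,T-\varepsilon]$; a lower bound on the principal curvatures (preservation of uniform convexity) is obtained from Andrews' maximum principle for symmetric tensors applied to the evolution of the Weingarten map. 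Two-sided curvature bounds make the equation uniformly parabolic on $[0,T-\varepsilon]$, so Krylov--Safonov and Schauder estimates bootstrap to uniform $C^\infty$ bounds there, and the solution extends past any time at which it stays uniformly convex.

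\textbf{Finite extinction time and convergence to a point.} Differentiating the enclosed volume gives $\tfrac{d}{d\tau}|\tilde\Omega_\tau|=-\int_{\mathbb{S}^n}f^\alpha\tilde K^{\alpha-1}\,dx$, and using the containments $B_{r(\tau)}(p)\subset\tilde\Omega_\tau\subset B_{R_0}$ together with the curvature estimates above, one bounds the right-hand side above by a negative power of $|\tilde\Omega_\tau|$ (by a negative constant when $\alpha=1$); the resulting differential inequality forces $|\tilde\Omega_\tau|\to0$ at a finite $T$. Since the volume tends to zero, it remains to exclude flattening onto a lower-dimensional set, i.e.\ to show that the eccentricity (circumradius over inradius) stays bounded on $[0,T)$. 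This I would prove by a reflection comparison: for a suitable hyperplane $H$, the reflection of $\tilde\Omega_\tau$ across $H$ solves the same equation, because the speed $f^\alpha\tilde K^\alpha$ depends only on the unit normal and on the curvature and is equivariant under reflections, so the comparison principle controls how far the two bodies can move apart and thereby the ratio of the extents of $\tilde\Omega_\tau$ in different directions. With bounded eccentricity the circumradius tends to $0$, so the nested decreasing family shrinks to the single point $p=\bigcap_{\tau<T}\tilde\Omega_\tau$, and $\tilde M_\tau\to p$ in Hausdorff distance as $\tau\to T$.

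\textbf{Main obstacle.} The delicate points are the uniform-convexity (lower curvature) estimate on $[0,T-\varepsilon]$ and the bounded-eccentricity estimate, precisely because the position-dependent weight $f^\alpha$ couples non-trivially to $\tilde K^\alpha$, so the naive reaction--diffusion and comparison arguments that suffice for $f\equiv1$ must be replaced by Andrews' tensor maximum principle and by carefully chosen geometric comparisons; this is exactly what is carried out in \cite{Andrews-pjm}.
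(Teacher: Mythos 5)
First, a point of context: the paper does not prove this statement at all --- Theorem~\ref{andrewsT} is imported verbatim as Theorem~13 of \cite{Andrews-pjm}, so there is no internal proof to compare against, and your sketch has to be judged as a reconstruction of Andrews' argument. Most of its architecture is standard and sound: the support-function reformulation $\partial_\tau\tilde u=-f^\alpha(\det(\bar\nabla^2\tilde u+\tilde u\bar g))^{-\alpha}$, the linearization and short-time existence, the nesting of the bodies, inner/outer sphere barriers with $f$ replaced by $\max f$ and $\min f$, the Tso-type speed bound, and finite-time extinction of the enclosed volume.

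The genuine gap is in the final and hardest step, where you exclude collapse onto a lower-dimensional set via a reflection comparison. The claim that the speed $f^\alpha(\nu)\tilde K^\alpha$ is ``equivariant under reflections'' is false for a genuinely anisotropic weight: if $R_H$ denotes reflection across a hyperplane $H$, the reflected body evolves with speed $f(R_H\nu)^\alpha\tilde K^\alpha$, and $f\circ R_H\neq f$ unless $f$ happens to be symmetric with respect to $H$. So the reflected family solves a \emph{different} flow and the avoidance principle between $\tilde\Omega_\tau$ and its reflection is unavailable --- this is precisely where isotropic arguments break in the anisotropic setting, which you yourself identify as the main difficulty. Moreover, the intermediate goal you set (a uniform bound on circumradius over inradius up to extinction) is much stronger than what Theorem~\ref{andrewsT} asserts: it is essentially the non-collapsing statement whose failure for general anisotropic flows outside $\tfrac1{n+2}<\alpha\le\tfrac1n$ is the very obstruction this paper is built to circumvent; convergence to a point only needs the diameter to tend to zero, not bounded eccentricity. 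The ingredient actually available here (and used by Andrews in place of any reflection symmetry) is the evolution of the speed in the support-function gauge, $\partial_\tau F=\alpha F\,(W^{-1})^{ij}\bar\nabla^2_{ij}F+\alpha F^2\operatorname{tr}(W^{-1})$ with $W=\bar\nabla^2\tilde u+\tilde u\bar g$, which shows $\min_{\mathbb S^n}F$ is nondecreasing; hence $F\ge\min_{\tilde M_0}F>0$ for all time and every width $\tilde u(x,\tau)+\tilde u(-x,\tau)$ decreases at a definite rate, which is the starting point for ruling out a degenerate limit. As written, your eccentricity step would fail, so the sketch does not close.
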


%Our approach is to use Theorem \ref{andrewsT} to produce weak solutions to equation (\ref{eq:Alexandrov}). 
Assume \[\oint_{\mathbb S^n}f=1, \quad |\Omega_0|=|B(1)|,\] solution (\ref{eq:gcf-f-un}) yields a smooth convex solution to the normalized flow (\ref{gcf-f-nor}) with volume preserved.

\medskip

Set \begin{equation}\label{h-eq} h_z(x,t)\doteqdot f(x) u_z^{-\frac{1}{\alpha}}(x,t)K(x,t), \quad d\sigma_t(x) =\frac{u_z(x,t)}{K(x,t)}d\theta(x),\end{equation} Note that that $\oint_{\mathbb{S}^n} d\sigma_t(x) =\oint_{\mathbb{S}^n} d\theta(x) =1$.

% $\mathcal{E}_{\alpha, f}(\Omega_t, z)$ monotone under the normalized flow. 
%\begin{lemma}\label{thm:mono1} Assume that $z\in \Omega_t$ for $t\in [t_1, t_2]$. Let $u_z(x, t)$ be the support function of the convex body $\Omega_t$ with respect to $z$ deformed by the normalized flow (\ref{gcf-f-nor}). Then
%\begin{eqnarray}\label{mono-nor-f}
%\quad \quad \mathcal{E}_{\alpha, f}(\Omega_{t_2}, z)-\mathcal{E}_{\alpha, f}(\Omega_{t_1}, z)&=&-\int_{t_1}^{t_2}\left(\frac{\oint_{\mathbb{S}^n} h_z^{\alpha+1}(x,t)\, d\sigma_t} {\oint_{\mathbb{S}^n} h_z(x,t) \, d\sigma_t \cdot \oint_{\mathbb{S}^n} h_z^{\alpha}(x,t)\, d\sigma_t}-1\right)\, dt\le 0,\end{eqnarray} with equality if and only if $h(x, t)=const$.
%\end{lemma}
%\begin{proof} Equation (\ref{gcf-f-nor}) is equivalent to
%\begin{equation}\label{ugcf-f-nor}
%\frac{\partial}{\partial t}u_z(x, t)= -\frac{f^\alpha(x) K^{\alpha}(x, t)}{\oint_{\mathbb{S}^n}f^\alpha  K^{\alpha-1}} +u_z(x,t)= -\frac{f^\alpha(x) K^{\alpha}(x, t)}{\oint_{\mathbb{S}^n}h_z^{\alpha}(x,t)\, d\sigma_t} +u_z(x,t).\end{equation}
%We have
%\[\frac{\partial}{\partial t} \mathcal{E}_{\alpha, f}(\Omega_{t}, z)=\frac{-\oint_{\mathbb{S}^n} h_z^{\alpha+1}(x,t)\, d\sigma_t} {\oint_{\mathbb{S}^n} h_z(x,t) \, d\sigma_t \cdot \oint_{\mathbb{S}^n} h_z^{\alpha}(x,t)\, d\sigma_t}+1.\]
%(\ref{mono-nor-f}) follows H\"older inequality. %(\ref{e-mono}) follows from (\ref{mono-nor-f}).\end{proof}

\medskip

Since the un-normalized flow (\ref{eq:gcf-f-un}) shrinks to a point in finite time, we may assume it's the origin. Then the support function $u(x,t)$ is positive for the normalized flow (\ref{gcf-f-nor}).  %As in \cite{GN}, positive solution to (\ref{gcf-f-nor}) is unique.
%\begin{proposition}\label{uniqueness-pos} For any given convex body $\Omega$ with normalized volume, there at most one positive solution of (\ref{gcf-f-nor}) which exists on $\mathbb{S}^n \times[0, \infty)$ such that $u(x, 0)$ is a support function of $\Omega$.\end{proposition}
%\begin{proof}  Suppose $v$ is another positive solution, at $t=0$, $v(x,0)=u(x,0)-\sum_{i=1}^{n+1} a_ix_i$.It is easy to check $\tilde v(x,t)=u(x,t)-e^t\sum_{i=1}^{n+1} a_ix_i$ is a solution of the flow and $\tilde v=v$. If $a \neq 0$, $v$ is unbounded. \end{proof}

%For each $\Omega_t$ corresponding to $u(x,t)$, $ \mathcal{E}_{\alpha, f}(\Omega_{t})$ is  monotonically decreasing. Let \[\mathcal{E}_{\alpha, f, \infty}\doteqdot\lim_{t\to \infty}  \mathcal{E}_{\alpha, f}(\Omega_{t}).\]

\begin{lemma}\label{cov-e} \begin{enumerate} 
\item[(a).] The entropy $ \mathcal{E}_{\alpha, f}(\Omega_{t})$ defined in (\ref{eq:def-e}) is  monotonically decreasing,  \begin{equation}\label{e-mono} \mathcal{E}_{\alpha, f}(\Omega_{t_2})\le \mathcal{E}_{\alpha, f}(\Omega_{t_1}), \quad \forall t_1\le t_2 \in [0, \infty).\end{equation} 
\item[(b).] There is $D>0$ depending only on $\inf f, \sup f, \alpha, \Omega_0$ such that 
\begin{equation}\label{De}
{\rm diam}\,\Omega_t=D(t)\le D, \ \forall t\ge 0.\end{equation}
\item[(c).] $\forall t_0\in [0, \infty)$,
\begin{equation}\label{100e}   \mathcal{E}_{\alpha, f}(\Omega_{t_0}, 0)\ge \mathcal{E}_{\alpha, f, \infty} +\int_{t_0}^{\infty}\left(\frac{\oint_{\mathbb{S}^n} h^{\alpha+1}(x,t)\, d\sigma_t} {\oint_{\mathbb{S}^n} h(x,t) \, d\sigma_t \cdot \oint_{\mathbb{S}^n} h^{\alpha}(x,t)\, d\sigma_t}-1\right)\, dt,\end{equation} where  \[h(x,t)=h_0(x, t), \ \mathcal{E}_{\alpha, f, \infty}\doteqdot\lim_{t\to \infty}  \mathcal{E}_{\alpha, f}(\Omega_{t}).\]
\end{enumerate}
\end{lemma}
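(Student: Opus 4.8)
The three parts are closely linked, so the plan is to organize everything around the key differential identity for the entropy along the normalized flow. First I would record the evolution equations induced by (\ref{gcf-f-nor}): the support function $u(x,t)$ (with respect to a fixed point, which we take to be the origin $0$, legitimate since the unnormalized flow shrinks to $0$) satisfies $\p_t u = -h^{\alpha} + u$ where $h = h_0$ is as in (\ref{h-eq}), and the Gauss curvature / surface area measure evolves accordingly so that $\oint_{\mathbb{S}^n} d\sigma_t = 1$ is preserved along with $|\Omega_t| = |B(1)|$. With these in hand, differentiate $\mathcal{E}_{\alpha,f}(\Omega_t, 0)$. For $\alpha \neq 1$, $\mathcal{E}_{\alpha,f}(\Omega_t,0) = \frac{\alpha}{\alpha-1}\log\left(\oint_{\mathbb{S}^n} u^{1-\frac1\alpha} f\, d\theta\right)$; writing $u^{1-\frac1\alpha} f\, d\theta$ in terms of $h$ and $d\sigma_t$ (indeed $f u^{1-\frac1\alpha} d\theta = h \cdot u/K \cdot u^{1/\alpha}\cdot (\ldots)$ — one checks $f u^{-\frac1\alpha} K \, d\sigma_t = h\, d\sigma_t$ and $\oint f u^{1-\frac1\alpha} d\theta = \oint h\, d\sigma_t \cdot(\ldots)$, so the integrand becomes a ratio of $h$-moments), a direct computation using $\p_t u = -h^\alpha + u$ and the volume-preserving normalization yields
$$
\frac{d}{dt}\,\mathcal{E}_{\alpha,f}(\Omega_t,0) = \oint_{\mathbb{S}^n} h\, d\sigma_t \cdot \left(1 - \frac{\oint_{\mathbb{S}^n} h^{\alpha+1}\, d\sigma_t}{\oint_{\mathbb{S}^n} h\, d\sigma_t\,\oint_{\mathbb{S}^n} h^{\alpha}\, d\sigma_t} \right)\cdot C
$$
up to a positive normalizing constant; the $\alpha=1$ case is the limiting version and gives the same structure. (This is exactly the computation in \cite{ABGN, AGN16}; I would cite it rather than redo all of it.)

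For part (a): the bracketed expression is $\le 0$ by a Hölder / Chebyshev-type inequality — for $\alpha > 0$ one has $\oint h^{\alpha+1}\, d\sigma_t \ge \oint h\, d\sigma_t \cdot \oint h^\alpha\, d\sigma_t$ since $h$ and $h^\alpha$ are similarly ordered (power functions of the same quantity), so by the Chebyshev sum inequality for the probability measure $d\sigma_t$ the product of averages is at most the average of the product. Hence $\frac{d}{dt}\mathcal{E}_{\alpha,f}(\Omega_t,0) \le 0$, and since $\mathcal{E}_{\alpha,f}(\Omega_t) = \sup_z \mathcal{E}_{\alpha,f}(\Omega_t,z)$ with $0$ inside $\Omega_t$, one deduces monotonicity of the sup as well — here I would note that translating the center of mass appropriately (or using that the entropy is realized at an interior maximizer and the flow commutes suitably) lets one upgrade the pointwise-at-$0$ statement to (\ref{e-mono}). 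For part (c): since $\mathcal{E}_{\alpha,f}(\Omega_t,0)$ is monotone decreasing and (by part (b)) bounded below, the limit $\mathcal{E}_{\alpha,f,\infty}$ exists, and integrating the differential identity from $t_0$ to $\infty$ gives (\ref{100e}) directly — the integrand there is precisely $-\frac{1}{C\oint h\, d\sigma_t}\cdot\frac{d}{dt}\mathcal{E}$ rearranged, and the normalizing constants cancel to leave the stated quotient minus $1$.

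For part (b), the diameter bound: this is where the real work is, and it is the main obstacle. The idea is to use the last assertion of Theorem \ref{f-diamests} — for any $\tau \in (0,1)$ there is $\delta \in (0,1)$ making conditions (i)/(ii)/(iii) hold provided $\tau \le \frac12\exp\left(\frac{1-\alpha}{\alpha}\mathcal{E}_{\alpha,f}(\Omega_t)\right)$ — together with the diameter estimates (i)/(ii)/(iii) themselves, which bound $\mathrm{diam}\,\Omega_t = D(t)$ in terms of $\mathcal{E}_{\alpha,f}(\Omega_t)$. By part (a) the entropy stays $\le \mathcal{E}_{\alpha,f}(\Omega_0)$ for all $t$; the subtlety is that the diameter estimates run the other direction (they bound diameter from the entropy being not too large, but they need a lower bound on the relevant threshold $\tau$ involving the entropy, which uses that the entropy does not blow up to $-\infty$). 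So the argument is: fix $\tau$ small depending only on $\mathcal{E}_{\alpha,f}(\Omega_0)$, $\inf f$, $\sup f$, $\alpha$; get the corresponding $\delta$; if $D(t)$ were ever too large, Theorem \ref{f-diamests} would force $\mathcal{E}_{\alpha,f}(\Omega_t)$ to exceed $\mathcal{E}_{\alpha,f}(\Omega_0)$ (for $\alpha \ge 1$) or, for $\alpha < 1$, would still pin $D(t)$ via the "either/or" alternative once $\tau$ is chosen below $\frac12\exp(\frac{\alpha-1}{\alpha}\mathcal{E}_{\alpha,f}(\Omega_t))$ — and this last bound on $\tau$ is available uniformly in $t$ because, again, $|\Omega_t| = |B(1)|$ is fixed and $\mathcal{E}_{\alpha,f}(\Omega_t)$ cannot run off to $+\infty$ by (a). Chasing the constants through the three cases and verifying the threshold conditions are met uniformly in $t$ is the bulk of the proof; everything else is the one differential identity plus an elementary inequality.
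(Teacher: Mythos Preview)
Your differential identity (modulo minor normalizing factors --- the evolution is $\partial_t u = u\bigl(1-h^\alpha/\oint h^\alpha\,d\sigma_t\bigr)$, not $-h^\alpha+u$) and the Chebyshev step are right, and your plan for (b) matches the paper. The real gap is in passing from monotonicity of $\mathcal{E}_{\alpha,f}(\Omega_t,0)$ to monotonicity of the supremum $\mathcal{E}_{\alpha,f}(\Omega_t)=\sup_z\mathcal{E}_{\alpha,f}(\Omega_t,z)$. Differentiating at the fixed origin only gives $\mathcal{E}_{\alpha,f}(\Omega_{t_1},0)\ge\mathcal{E}_{\alpha,f}(\Omega_{t_2},0)$, but for (a) you need $\mathcal{E}_{\alpha,f}(\Omega_{t_1})\ge\mathcal{E}_{\alpha,f}(\Omega_{t_2},z)$ for \emph{every} interior $z\in\Omega_{t_2}$; the hand-wave ``translating the center of mass appropriately'' does not supply this. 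The paper's device (following \cite{GN}) is a \emph{moving} reference point: given $T$ and any interior $a^T\in\Omega_T$, set $a^T(t)=e^{t-T}a^T$; a maximum-principle argument on $u^T(x,t)=u(x,t)-e^{t-T}\langle a^T,x\rangle$ (which satisfies the same flow equation) shows $u^T>0$, i.e.\ $a^T(t)\in\mathrm{int}\,\Omega_t$, for all $t\le T$. The identity then applies to $\mathcal{E}_{\alpha,f}(\Omega_t,a^T(t))$ and gives $\mathcal{E}_{\alpha,f}(\Omega_t)\ge\mathcal{E}_{\alpha,f}(\Omega_t,a^T(t))\ge\mathcal{E}_{\alpha,f}(\Omega_T,a^T)$; taking the sup over $a^T$ yields (a).

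The same gap bites in (c). Your direct integration at $0$ gives $\mathcal{E}_{\alpha,f}(\Omega_{t_0},0)=\lim_{T\to\infty}\mathcal{E}_{\alpha,f}(\Omega_T,0)+\int_{t_0}^\infty(\cdots)\,dt$, but $\mathcal{E}_{\alpha,f,\infty}$ is by definition $\lim_T\mathcal{E}_{\alpha,f}(\Omega_T)$ (the sup), and since $\mathcal{E}_{\alpha,f}(\Omega_T,0)\le\mathcal{E}_{\alpha,f}(\Omega_T)$ the resulting inequality goes the \emph{wrong way}. (Moreover, the lower bound on $\mathcal{E}_{\alpha,f}(\Omega_t,0)$ you invoke is not immediate from (b): the diameter bound alone does not prevent $0$ from approaching $\partial\Omega_t$.) The paper instead takes $a^T$ to be a near-maximizer of $\mathcal{E}_{\alpha,f}(\Omega_T,\cdot)$; since $|a^T|\le D$ by (b), one has $a^T(t)=e^{t-T}a^T\to 0$ as $T\to\infty$ uniformly on compact $t$-intervals, so the integrand with $h_{a^T(t)}$ converges to the one with $h=h_0$ while the endpoint term converges to $\mathcal{E}_{\alpha,f,\infty}$, giving (\ref{100e}) with the correct sign.
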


 \begin{proof}\begin{enumerate}\item[(a).]  We follow argument in  \cite{GN}. For each $T_0>$ fixed, pick $T > T_0$. Let $a^{T}=(a^{T}_1,\cdots, a^{T}_{n+1})$ be
an interior point of $\Omega_T$. Set $u^T=u- e^{t-T}\sum_{i=1}^{n+1} a^T_ix_i$, it satisfies equation %can be checked that $u^T$ satisfies 
\begin{equation}\label{aT}
\frac{\partial}{\partial t}u^T(x, t)= -\frac{f^\alpha(x) K^{\alpha}(x, t)}{\oint_{\mathbb{S}^n}f^\alpha  K^{\alpha-1}} +u^T(x,t).\end{equation}

Note that since $a^T$ is an interior point of $\Omega_T$ and $u(x,T)$ is the support function of $\Omega_T$ with respect to $a^T$, 
 $u^T(x, T)> 0, \forall x\in \mathbb S^n$.
 We claim \[u^T(x, t)>0, \ \forall  t\in [0, T).\] Suppose $u^T(x_0, t')\le 0$ for some $0<t'<T, x_0\in \mathbb S^n$, the equation (\ref{aT}) implies $u^T(x_0, t)<0$ for
all $t>t'$, which contradicts to $u^T(x, T)> 0$. 

Set $a^T(t)=e^{t-T}a^T$. By the claim, $a^T(t)$ is in the interior of $\Omega_t, \ \forall t\le T$.  Denote \[d\sigma_{T,t}=u^T(x,t)K^{-1}(x,t)d\theta,\] we rewrite (\ref{aT}) as
\begin{equation}\label{aTT}
\frac{\partial}{\partial t}u_{a^T(t)}(x,t)= -\frac{f^\alpha(x) K^{\alpha}(x, t)}{\oint_{\mathbb{S}^n}h_{a^T(t)}^{\alpha}(x,t)\, d\sigma_{T,t}} +u_{a^T(t)}(x,t).\end{equation} We have
\[\frac{\partial}{\partial t} \mathcal{E}_{\alpha, f}(\Omega_{t}, a^T(t))=\frac{-\oint_{\mathbb{S}^n} h_{a^T(t)}^{\alpha+1}(x,t)\, d\sigma_{T,t}} {\oint_{\mathbb{S}^n} h_{a^T(t)}(x,t) \, d\sigma_{T,t} \cdot \oint_{\mathbb{S}^n} h_{a^T(t)}^{\alpha}(x,t)\, d\sigma_{T,t}}+1.\]

%Denote $\mathcal{E}_{\alpha, f, T}(t)$ defined as in (\ref{eq:def-e-z}) with $u_z$ replaced by $u^T$, and $h_T$ defined as in (\ref{h-eq}) with $u$ replaced by $u^T$. Note that $\mathcal{E}_{\alpha, f, T}(t)=\mathcal{E}_{\alpha, f}(\Omega_t, a^T(t))$.
%By equation (\ref{aT}), a similar calculation as in the proof of Lemma \ref{thm:mono1}, 
Thus, $\forall t<T$,
\begin{eqnarray}\label{n-mono} &\mathcal{E}_{\alpha, f}(\Omega_{t}, a^T(t))-\mathcal{E}_{\alpha, f}(\Omega_T, a^T)\\ \nonumber &= \int_{t}^{T}\oint_{\mathbb S^n} \left(\frac{\oint_{\mathbb{S}^n} h_{a^T(t)}^{\alpha+1}(x,t)\, d\sigma_{T,t}} {\oint_{\mathbb{S}^n} h_{a^T(t)}(x,t) \, d\sigma_{T,t} \cdot \oint_{\mathbb{S}^n} h_{a^T(t)}^{\alpha}(x,t)\, d\sigma_{T,t}}-1\right)\, dt\le 0.\end{eqnarray}
Therefore, \[ \mathcal{E}_{\alpha, f}(\Omega_{t})\ge\mathcal{E}_{\alpha, f}(\Omega_T, a^T), \ \forall t<T.\]
Since $a^T$ is arbitrary, (\ref{e-mono}) is proved.

\medskip

%with equality if and only if $h(x, t)=const$
\item[(b).] The boundedness of $D(t)$ follows from Theorem~\ref{f-diamests} combined with the estimate
$\mathcal{E}_{\alpha, 1}(\Omega_{t})\leq \mathcal{E}_{\alpha, 1}(B(1))$ from (a)
  (see also \cite{AGN16, GN}).
The only non-trivial case is when $\frac1{n+2}<\alpha<1$ because we have to choose a $\tau$ independent of $t$.
However, we may choose any $\tau\in(0,1)$ with
$\tau\leq  \frac12\exp\left(\frac{1-\alpha}{\alpha}\,\mathcal{E}_{\alpha, f} (B(1))\right)$ according to 
$\mathcal{E}_{\alpha, 1}(\Omega_{t})\leq \mathcal{E}_{\alpha, 1}(B(1))$.

\medskip

%Since $a^T \in \bar \Omega_T$,  \[|a^T|\le 2D(t).\] If $T$ large enough, $u^T(x, 0)\ge0, \forall x\in \mathbb S^n$. 
\item[(c).] $\forall \epsilon>0, \ \forall t_0$ fixed, pick $T>T_0>t_0$. As $ \mathcal{E}_{\alpha, f}(\Omega_{T})$ is bounded by (a), $\exists a^T$ inside $\Omega_T$ such that $ \mathcal{E}_{\alpha, f}(\Omega_{T})\le  \mathcal{E}_{\alpha, f}(\Omega_{T}, a^T)+\epsilon$. By (\ref{n-mono}), 
\begin{eqnarray*} &\mathcal{E}_{\alpha, f}(\Omega_{t_0}, a^T(t_0))-\mathcal{E}_{\alpha, f}(\Omega_{T})\\ &\ge \int_{t_0}^{T_0}\oint_{\mathbb S^n} \left(\frac{\oint_{\mathbb{S}^n} h_{a^T(t)}^{\alpha+1}(x,t)\, d\sigma_{T,t}} {\oint_{\mathbb{S}^n} h_{a^T(t)}(x,t) \, d\sigma_{T,t} \cdot \oint_{\mathbb{S}^n} h_{a^T(t)}^{\alpha}(x,t)\, d\sigma_{T,t}}-1\right)\, dt-\epsilon.\end{eqnarray*} As $|a^T|\le D, \ \forall T$, let $T\to \infty$, \[a^T(t)\to 0, \ u^T(x,t)\to u(x,t), \ \ \mbox{ uniformly for $0\le t\le T_0, x\in \mathbb S^n$.} \] We obtain $\forall t_0<T_0$, 
\begin{eqnarray*} \mathcal{E}_{\alpha, f}(\Omega_{t_0}, 0)-\mathcal{E}_{\alpha, f, \infty}\ge \int_{t_0}^{T_0}\oint_{\mathbb S^n} \left(\frac{\oint_{\mathbb{S}^n} h^{\alpha+1}(x,t)\, d\sigma_t} {\oint_{\mathbb{S}^n} h(x,t) \, d\sigma_t \cdot \oint_{\mathbb{S}^n} h^{\alpha}(x,t)\, d\sigma_t}-1\right)\, dt-\epsilon.\end{eqnarray*}
Then let $T_0\to \infty$, % as $\mathcal{E}_{\alpha, f}(\Omega_{t_0}, 0)=\mathcal{E}_{\alpha, f}(t_0)$, and 
as $\epsilon>0$ is arbitrary, we obtain (\ref{100e}). \end{enumerate}
 \end{proof}
 
 \medskip
 
% \begin{corollary}\label{c-cov-e} Let  $u(x, t)$ be the unique positive solution of (\ref{gcf-f-nor}).  Assume diameter $D(t)$ of $\Omega_t$ is uniformly bounded.  Then there is constant $C>0$ depending only on $n, \alpha, \sup f, \inf f$ and initial $\Omega_0$ such that $\forall t\in [0, \infty)$,
%\begin{equation}\label{c100e}  C\ge  \mathcal{E}_{\alpha, f}(\Omega_{t}, 0)\ge \frac1{C}.\end{equation} \end{corollary}
%\begin{proof} We note that  $\mathcal{E}_{\alpha, f}(\Omega)\sim \mathcal{E}_{\alpha, 1}$. In particular  $\mathcal{E}_{\alpha, f, \infty}\sim \mathcal{E}_{\alpha, 1, \infty}$.  (\ref{c100e}) follows from Lemma \ref{thm:mono1}, Lemma \ref{cov-e}) and results for  $\mathcal{E}_{\alpha, 1}$ in \cite{AGN, GN}. \end{proof}

%The constant $C$ in (\ref{c100e}) is not satisfactory as it depends on $\sup f$ and $\inf f$. In the next section, we will provide a refined estimate for entropy and diameter for general $0\le f\in L^1(\mathbb S^n)$ with appropriate condition depending on $\alpha$.

\section{Weak convergence}
\label{sec:Flow}

The goal of this section is to prove the following statement.

\begin{theorem}
\label{MongeAmpere-with-entropy}
For a $C^\infty$ function $f:\mathbb{S}^n\to (0,\infty)$ and $\alpha>\frac1{n+2}$ with
$ \oint_{\mathbb{S}^n}f=1$, there exist $\lambda>0$ and a convex body 
$\Omega\subset\R^{n+1}$ with $o\in\Omega$ whose support function $u$ is a (possibly weak) solution
of the Monge-Amp\`ere equation
\begin{equation}\label{eq:MongeAmpere1}
u^{\frac1\alpha}\det(\bar{\nabla}^2_{ij} u+u\bar{g}_{ij})= f 
\end{equation}
and $\Omega$ satisfies that
\begin{equation}\label{eq:entropy-for-MongeAmpere}
\mathcal{E}_{\alpha, f} (\lambda\Omega)\leq \mathcal{E}_{\alpha, f} (B(1)), \quad |\lambda\Omega|=|B(1)|,
\end{equation}
where $C^{-1}<\lambda<C$  for a $C>1$ depending only on the $\alpha, \tau, \delta$ in Theorem \ref{f-diamests} such that $f$ satisfies the conditions in Theorem \ref{f-diamests}.
\end{theorem}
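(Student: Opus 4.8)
The plan is to run the normalized anisotropic flow \eqref{gcf-f-nor} starting from $\Omega_0=B(1)$, extract a limiting convex body from the diameter bound, and show that the limit solves \eqref{eq:MongeAmpere1} after rescaling. First I would invoke Theorem~\ref{andrewsT} to produce the un-normalized flow $\tilde M_\tau$ from the smooth strictly convex initial surface $\partial B(1)$; since it shrinks to a point, which I place at the origin, the rescaled flow \eqref{gcf-f-nor} is a smooth convex solution with $|\Omega_t|=|B(1)|$ and positive support function for all $t\ge 0$. By Lemma~\ref{cov-e}(a) the entropy $\mathcal{E}_{\alpha,f}(\Omega_t)$ is monotone nonincreasing, so in particular $\mathcal{E}_{\alpha,f}(\Omega_t)\le \mathcal{E}_{\alpha,f}(B(1))$ for all $t$; combined with the last assertion of Theorem~\ref{f-diamests} (choosing, in the case $\tfrac1{n+2}<\alpha<1$, a single $\tau$ valid for all $t$ since the entropy bound is uniform, exactly as in Lemma~\ref{cov-e}(b)) this yields a uniform diameter bound ${\rm diam}\,\Omega_t\le D$. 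Together with $|\Omega_t|=|B(1)|$ and the centroid–ellipsoid position, the bodies $\Omega_t$ stay in a fixed compact family of convex bodies bounded away from degeneration, so by Blaschke selection there is a sequence $t_j\to\infty$ with $\Omega_{t_j}\to\Omega_\infty$ in Hausdorff distance, $\Omega_\infty$ a convex body with nonempty interior and $|\Omega_\infty|=|B(1)|$, and $\mathcal{E}_{\alpha,f}(\Omega_\infty)\le\mathcal{E}_{\alpha,f}(B(1))$ by continuity of the entropy on this family.

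Next I would use Lemma~\ref{cov-e}(c): since $\mathcal{E}_{\alpha,f}(\Omega_{t_0},0)$ is bounded below (by $\mathcal{E}_{\alpha,f,\infty}$, finite because the diameter and inradius are controlled) and the integrand in \eqref{100e} is nonnegative by the Cauchy–Schwarz/Jensen inequality $\oint h\,d\sigma\cdot\oint h^\alpha\,d\sigma\le\oint h^{\alpha+1}\,d\sigma$ applied in the correct direction, the improper integral $\int_0^\infty(\cdots)\,dt$ converges. Hence there is a sequence $s_j\to\infty$ (which I may take to be a subsequence of the $t_j$) along which the integrand tends to $0$, i.e.
\begin{equation*}
\frac{\oint_{\mathbb{S}^n} h^{\alpha+1}(x,s_j)\,d\sigma_{s_j}}{\oint_{\mathbb{S}^n} h(x,s_j)\,d\sigma_{s_j}\cdot\oint_{\mathbb{S}^n} h^{\alpha}(x,s_j)\,d\sigma_{s_j}}\longrightarrow 1.
\end{equation*}
The equality case of the relevant inequality forces $h(\cdot,s_j)$ to become asymptotically constant on the support of $d\sigma_{s_j}$; quantitatively, $\|h(\cdot,s_j)-c_j\|$ (in a suitable $L^2(d\sigma_{s_j})$ or measure sense) $\to 0$ for constants $c_j=\oint h^{\alpha+1}/\oint h^{\alpha}$. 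Unwinding the definition $h_z(x,t)=f(x)u_z^{-1/\alpha}(x,t)K(x,t)$ and $d\sigma_t=\frac{u(x,t)}{K(x,t)}\,d\theta$, the statement $h\equiv c$ together with $d\sigma_t$ being a probability measure is precisely the Monge–Ampère equation $u^{1/\alpha}\det(\bar\nabla^2 u+u\bar g)=c^{-1}f$ up to the constant. I would pass this to the limit in the weak (Alexandrov) sense: the surface area measures $S_{\Omega_{s_j}}\to S_{\Omega_\infty}$ weakly (weak continuity of $S_\Omega$ in $\Omega$), $u_{\Omega_{s_j}}\to u_{\Omega_\infty}$ uniformly, so the measures $u^{1-1/\alpha}S_{\Omega_{s_j}}=$ (up to normalization) $f\,d\theta\cdot(\text{something}\to\text{const})$ converge, yielding $u_{\Omega_\infty}^{1/\alpha}\,dS_{\Omega_\infty}=\lambda^{-1}f\,d\theta$ weakly for some $\lambda>0$ absorbing the limit of the $c_j$'s and the averaging constant. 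Rescaling by $\lambda$ (a positive scalar, which interacts homogeneously with both sides of \eqref{eq:MongeAmpere1}) gives a body $\Omega$ and, after a further dilation to restore $|\lambda\Omega|=|B(1)|$, the entropy inequality \eqref{eq:entropy-for-MongeAmpere}; the bound $C^{-1}<\lambda<C$ follows by tracking the diameter/volume bounds, which depend only on $\alpha$ and the $\tau,\delta$ fixed in Theorem~\ref{f-diamests}.

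The main obstacle I expect is the passage from "the integrand in \eqref{100e} tends to $0$ along a subsequence" to "the limit body $\Omega_\infty$ actually satisfies the Monge–Ampère equation in a genuine weak sense." The delicate points are: (a) identifying the equality case of the inequality $\oint h\,d\sigma\cdot\oint h^\alpha\,d\sigma\le\oint h^{\alpha+1}\,d\sigma$ and making the near-equality quantitative enough to conclude that $h(\cdot,s_j)$ concentrates near a constant in a topology strong enough to survive the limit — one must be careful because the measures $d\sigma_{s_j}$ themselves vary and could a priori charge sets where the support function degenerates; (b) controlling the constants $c_j$ away from $0$ and $\infty$ using the uniform diameter bound and the lower inradius bound coming from $|\Omega_t|=|B(1)|$ plus \eqref{centroid-ellipsoid}; and (c) the weak continuity argument, where the subtlety is that $u_{\Omega_\infty}$ may vanish at some $x\in\mathbb{S}^n$ (as the introduction explicitly warns for $\alpha>1/n$), so the convergence $u^{1-1/\alpha}S_{\Omega_{s_j}}\to u^{1-1/\alpha}S_{\Omega_\infty}$ cannot be taken for granted pointwise and must be handled through the Alexandrov formulation, using that $u^{1-1/\alpha}$ is bounded when $1-1/\alpha\ge 0$ and handled via the Blaschke–Santaló-type $L^{n+1}$ bound on $u^{-(n+1)}$ (as in Case~3 of the proof of Theorem~\ref{f-diamests}) when $1-1/\alpha<0$. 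Everything else — the flow's existence, smoothness, volume preservation, monotonicity, and the diameter bound — is already packaged in Theorems~\ref{andrewsT} and~\ref{f-diamests} and Lemma~\ref{cov-e}.
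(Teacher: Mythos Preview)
Your outline is correct and follows essentially the same route as the paper: run the normalized flow from $B(1)$, use monotonicity and the diameter estimates to get compactness, extract a subsequence along which the integrand in \eqref{100e} vanishes, and pass to the limit. You have also correctly identified the genuine obstacle, namely point (a): turning near-equality in $\oint h\,d\sigma\cdot\oint h^\alpha\,d\sigma\le\oint h^{\alpha+1}\,d\sigma$ into a quantitative statement that $h$ is close to a constant.

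The paper resolves (a) with a sharpened H\"older inequality (Lemma~\ref{holder room}): for $p^{-1}+q^{-1}=1$ and $\beta=\min\{p^{-1},q^{-1}\}$,
\[
\int|FG|\,d\mu\le\|F\|_p\|G\|_q\Bigl(1-\beta\int\Bigl(\tfrac{|F|^{p/2}}{\|F\|_p}-\tfrac{|G|^{q/2}}{\|G\|_q}\Bigr)^2\,d\mu\Bigr),
\]
applied with $p=\alpha+1$, $F=h^{1/(1+\alpha)}$, $G\equiv1$. This immediately gives $\oint\bigl((h/\eta)^{(1+\alpha)/2}-1\bigr)^2\,d\sigma_{t_k}\to0$, which is exactly the $L^2(d\sigma_{t_k})$ control you wanted. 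From there the paper proves (Proposition~\ref{k-weak-conv}) the stronger statement $\oint_{\mathbb S^n}\bigl|u_k^{1/\alpha}\sigma_{n,k}-f/\eta\bigr|\,d\theta\to0$, i.e.\ $L^1$ convergence of the densities, which sidesteps your concern (c) entirely: one never needs to worry about $u_\infty$ vanishing or about weak continuity of $u^{1-1/\alpha}S_\Omega$ separately, because the Monge--Amp\`ere measure itself is shown to converge in $L^1$ to $f/\eta$. Your concern (b) is handled by \eqref{etabd}.
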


From now on, we will assume that the $f$ in Theorem~\ref{MongeAmpere-with-entropy} satisfies the corresponding condition in Theorem \ref{f-diamests} and $\Omega_0=B(1)$ in (\ref{gcf-f-nor}). We note that for any $z\in B(1)$, $v_z\leq 2$ for the support function $v_z$ of $B(1)$ at $z$, and hence
if $\alpha>\frac1{n+2}$, then
\begin{equation}
\label{Bentropyalphauppbound}
\mathcal{E}_{\alpha, f_k} (B(1))\leq\left\{
\begin{array}{rl}
\frac{\alpha}{\alpha-1}\cdot\log 2^{1-\frac1\alpha}&\mbox{ if }\alpha\neq 1;\\
\log 2&\mbox{ if }\alpha=1.
\end{array} \right.
\end{equation}

The following is a consequence of Theorem \ref{f-diamests} and Lemma \ref{cov-e}.

\begin{lemma}\label{f-entropybd} 
There exist $C_{\alpha, \tau, \delta}>0, D_{\alpha, \tau, \delta}>0$ and  $c_{\alpha, \tau, \delta}\in \mathbb R$ depending only on constants $\alpha, \tau, \delta$ in Theorem \ref{f-diamests} such that,  along (\ref{gcf-f-nor}), we have
\begin{equation}\label{cc100e}  D(t)\le D_{\alpha, \tau, \delta},\ \mathcal{E}_{\alpha, f}(\Omega_{t}, 0)\ge c_{\alpha, \tau, \delta}, \   \frac{1}{C_{\alpha, \tau, \delta}}\le \oint_{\mathbb S^n} h(x,t)d\sigma_t\le C_{\alpha, \tau, \delta}.\end{equation}\end{lemma}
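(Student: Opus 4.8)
The plan is to combine the three parts of Lemma~\ref{cov-e} with the three estimates of Theorem~\ref{f-diamests} and the explicit upper bound \eqref{Bentropyalphauppbound} for $\mathcal{E}_{\alpha,f}(B(1))$. First I would record the monotonicity \eqref{e-mono}: since $\Omega_0=B(1)$, for every $t\ge 0$ we have $\mathcal{E}_{\alpha,f}(\Omega_t)\le \mathcal{E}_{\alpha,f}(B(1))$, which together with \eqref{Bentropyalphauppbound} gives a one-sided bound on the entropy that depends only on $\alpha$. Then the diameter bound $D(t)\le D_{\alpha,\tau,\delta}$ is exactly part (b) of Lemma~\ref{cov-e}, where I would emphasize that the constant depends only on $\alpha$, on the quantities $\inf f,\sup f$ (hence on $\tau,\delta$ once $f$ is fixed to satisfy the conditions of Theorem~\ref{f-diamests}), and on $\Omega_0=B(1)$; the one delicate point, already handled in the proof of Lemma~\ref{cov-e}(b), is that in the range $\frac1{n+2}<\alpha<1$ the parameter $\tau$ must be chosen independently of $t$, which is legitimate because $\tau$ may be taken $\le \frac12\exp\!\big(\tfrac{1-\alpha}{\alpha}\mathcal{E}_{\alpha,f}(B(1))\big)$ and the latter is a fixed number by monotonicity.

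Next I would derive the lower bound $\mathcal{E}_{\alpha,f}(\Omega_t,0)\ge c_{\alpha,\tau,\delta}$. The integrand in \eqref{100e} is $\ge 0$ by the Cauchy--Schwarz (or power-mean / Chebyshev sum) inequality applied to the probability measure $d\sigma_t$: writing $H(t)$ for the ratio $\oint h^{\alpha+1}\,d\sigma_t\big/\big(\oint h\,d\sigma_t\cdot\oint h^{\alpha}\,d\sigma_t\big)$, one has $H(t)\ge 1$ because $\oint h^{\alpha+1}\,d\sigma_t=\oint h\cdot h^\alpha\,d\sigma_t\ge \oint h\,d\sigma_t\cdot\oint h^\alpha\,d\sigma_t$ by the covariance inequality for the comonotone functions $h$ and $h^\alpha$ (for $\alpha>0$ both are increasing functions of $h$). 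Hence \eqref{100e} with $t_0=0$ gives $\mathcal{E}_{\alpha,f}(\Omega_0,0)\ge \mathcal{E}_{\alpha,f,\infty}$, and more generally, dropping the nonnegative integral, $\mathcal{E}_{\alpha,f}(\Omega_{t_0},0)\ge \mathcal{E}_{\alpha,f,\infty}$ for all $t_0$. It then suffices to bound $\mathcal{E}_{\alpha,f,\infty}$ from below, and this is where Theorem~\ref{f-diamests} enters in the reverse direction: in each of the cases (i), (ii), (iii), the displayed inequality there bounds $\exp\big(\tfrac{\alpha-1}{\alpha}\mathcal{E}_{\alpha,f}(\Omega)\big)$ (or $\mathcal{E}_{1,f}(\Omega)$, or controls $D$) from below in terms of $\delta,\tau,D$; since $D=D(t)\ge 2$ is controlled from below trivially (the volume is fixed $=|B(1)|$, so ${\rm diam}\ge 2$) and from above by part (b), these inequalities translate into a lower bound for $\mathcal{E}_{\alpha,f}(\Omega_t)$ depending only on $\alpha,\tau,\delta$; since $\mathcal{E}_{\alpha,f}(\Omega_t,0)\ge \mathcal{E}_{\alpha,f}(\Omega_t)-C$ is not quite what is needed, I would instead argue directly that the centroid normalization in the proof of Theorem~\ref{f-diamests} already yields the bound with $z_0$-dependence absorbed, giving $\mathcal{E}_{\alpha,f}(\Omega_t,0)\ge c_{\alpha,\tau,\delta}$ after noting $\mathcal{E}_{\alpha,f}(\Omega,0)\ge \mathcal{E}_{\alpha,f}(\Omega)-\log(n+1)^{C}$ via \eqref{centroid-ellipsoid} comparing support functions at $0$ and at the centroid.

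Finally, for the two-sided bound $\tfrac1{C_{\alpha,\tau,\delta}}\le \oint_{\mathbb S^n}h(x,t)\,d\sigma_t\le C_{\alpha,\tau,\delta}$ I would unwind the definitions: by \eqref{h-eq}, $\oint_{\mathbb S^n}h\,d\sigma_t=\oint_{\mathbb S^n}f\,u^{-1/\alpha}K\cdot\frac{u}{K}\,d\theta=\oint_{\mathbb S^n}f\,u^{1-1/\alpha}\,d\theta$, which is precisely $\exp\big(\tfrac{1-\alpha}{\alpha}\mathcal{E}_{\alpha,f}(\Omega_t,0)\big)$ when $\alpha\ne1$ and $\le$ (resp.\ $\ge$) the entropy-type quantity otherwise. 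Thus the two-sided bound on $\oint h\,d\sigma_t$ is equivalent to a two-sided bound on $\mathcal{E}_{\alpha,f}(\Omega_t,0)$: the upper bound on the entropy (hence lower or upper bound on $\oint h\,d\sigma_t$ depending on the sign of $1-\alpha$) comes from monotonicity plus \eqref{Bentropyalphauppbound}, and the missing direction comes from the diameter bound in part (b) together with $u\le D(t)$ on $\mathbb S^n$ and $u\ge$ (inradius) $\ge c(n)/D(t)^n$ by John/Blaschke--Santaló (or by \eqref{centroid-ellipsoid} and $|\Omega_t|=|B(1)|$), both of which are controlled by $D_{\alpha,\tau,\delta}$. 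I expect the main obstacle to be the bookkeeping in the lower bound for $\mathcal{E}_{\alpha,f}(\Omega_t,0)$ in case (iii): there Theorem~\ref{f-diamests}(iii) gives an \emph{upper} bound on $D$ in terms of the entropy rather than a clean lower bound on the entropy, so one must use the contrapositive — the already-established upper bound $D(t)\le D_{\alpha,\tau,\delta}$ forces $\oint_{\mathbb S^n}f\,u^{1-1/\alpha}$ to be bounded below (since otherwise $D$ would have to be large), and this requires choosing $\tau$ small enough, consistently with the choice made in part (b), so that the dichotomy ``either $D\le 16n^2/\delta^2$ or $D\le(\cdots)^{2/p}$'' can be run backwards.
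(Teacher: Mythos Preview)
Your approach is essentially the paper's: monotonicity gives $\mathcal{E}_{\alpha,f}(\Omega_t)\le\mathcal{E}_{\alpha,f}(B(1))$, Theorem~\ref{f-diamests} then bounds $D(t)$; the isodiametric inequality gives $D(t)\ge 2$, so Theorem~\ref{f-diamests} (plus, in case (iii), the centroid/John argument you sketch) bounds $\mathcal{E}_{\alpha,f}(\Omega_t)$ below and hence $\mathcal{E}_{\alpha,f,\infty}\ge c_{\alpha,\tau,\delta}$; then \eqref{100e} with the nonnegative integrand gives $\mathcal{E}_{\alpha,f}(\Omega_t,0)\ge\mathcal{E}_{\alpha,f,\infty}\ge c_{\alpha,\tau,\delta}$. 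The paper's proof is this chain in two sentences; you have supplied the details it omits (the Chebyshev-type inequality for the integrand, the identity $\oint h\,d\sigma_t=\oint f\,u^{1-1/\alpha}$, and the inradius lower bound for case (iii), which the paper's appeal to ``Theorem~\ref{f-diamests}'' glosses over).

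There is one small misstep in your last paragraph. To get the missing direction of the two-sided bound on $\oint h\,d\sigma_t$ you propose $u\ge(\text{inradius})\ge c(n)/D(t)^n$. That inradius bound holds for the support function centred at the \emph{centroid}, but $u=u_0$ here is centred at the origin (the limit point of the unnormalized flow), and there is no reason the origin should stay near the centroid along the flow, so no uniform lower bound on $u_0$ is available this way. The detour is unnecessary: you already established $\mathcal{E}_{\alpha,f}(\Omega_t,0)\ge c_{\alpha,\tau,\delta}$ in the previous paragraph via \eqref{100e}, and you also have $\mathcal{E}_{\alpha,f}(\Omega_t,0)\le\mathcal{E}_{\alpha,f}(\Omega_t)\le\mathcal{E}_{\alpha,f}(B(1))$ since the entropy is a supremum over $z$. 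Feeding both into your identity $\oint h\,d\sigma_t=\exp\!\big(\tfrac{\alpha-1}{\alpha}\mathcal{E}_{\alpha,f}(\Omega_t,0)\big)$ (and noting $\oint h\,d\sigma_t=\oint f=1$ when $\alpha=1$) gives the two-sided bound immediately, with no pointwise control of $u_0$ needed. This is exactly what the paper means by ``the inequalities follow from Lemma~\ref{cov-e}.''
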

\begin{proof} For each $\alpha>\frac1{n+2}$ fixed with condition on $f$ as in Theorem \ref{f-diamests},  $\mathcal{E}_{\alpha, f}(\Omega_{t})$ is bounded from below in terms of the diameter $D(t)$. Since $|\Omega_t|=|B(1)|$, we have $D(t)\ge 2$ by the Isodiametric Inequality ({\it cf.} Schneider \cite{Sch14}). By Theorem \ref{f-diamests},  $\mathcal{E}_{\alpha, f}(\Omega_{t})$ is bounded from below by a constant $c_{\alpha, \tau, \delta}>0$, and hence $\mathcal{E}_{\alpha, f, \infty} \ge c_{\alpha, \tau, \delta}$. 
It follows from Lemma \ref{cov-e} that $\mathcal{E}_{\alpha, f}(\Omega_{t})\le \mathcal{E}_{\alpha, f}(B(1))$, which estimate combined with \eqref{Bentropyalphauppbound} and Theorem~\ref{f-diamests} 
 yields $D(t)\le D_{\alpha, \tau, \delta}$ by a constant $D_{\alpha, \tau, \delta}$ depending only on constants in condition on $f$ in Theorem~\ref{f-diamests}.  Finally the inequalities follows from Lemma \ref{cov-e}.\end{proof}

Set 
\begin{equation}\label{eta} 
\eta(t)=\oint_{\mathbb{S}^n} h(x,t) \, d\sigma_{t}.
\end{equation}
We note that $\oint_{\mathbb{S}^n} h(x,t) \, d\sigma_{t} $ is monotone and bounded from below and above by Lemma \ref{f-entropybd}, and hence we have
\begin{equation}\label{etabd}C_{\alpha, \tau, \delta}\ge \lim_{t\to \infty} \oint_{\mathbb{S}^n} h(x,t)=\eta\ge \frac1{C_{\alpha, \tau, \delta}}.\end{equation}

\medskip

By Lemma \ref{cov-e} and Corollary \ref{f-entropybd}, 
\begin{equation}\label{100e1} \int_0^{\infty} \left(\frac{\oint_{\mathbb{S}^n} h^{\alpha+1}(x,t)\, d\sigma_t} {\oint_{\mathbb{S}^n} h(x,t) \, d\sigma_t \cdot \oint_{\mathbb{S}^n} h^{\alpha}(x,t)\, d\sigma_t}-1\right)\, dt<\infty.\end{equation}
Since the integrand is non-negative, $\exists t_k\to \infty$ such that
\begin{equation}\label{100e2} \lim_{k\to \infty} \left( \frac{\oint_{\mathbb{S}^n}h^{\alpha+1}(x,t_k)\, d\sigma_{t_k}} {\oint_{\mathbb{S}^n} h(x,t_k) \, d\sigma_{t_k} \cdot \oint_{\mathbb{S}^n} h^{\alpha}(x,t_k)\, d\sigma_{t_k}}-1\right)=0.\end{equation}
This implies 
\begin{equation}\label{100e3} \lim_{k\to \infty}\frac{\left(\oint_{\mathbb{S}^n}h^{\alpha+1}(x,t_k)\, d\sigma_{t_k}\right)^{\frac{1}{1+\alpha}}}{\oint_{\mathbb{S}^n} h(x,t_k) \, d\sigma_{t_k} }= \lim_{k\to \infty}\frac{\left(\oint_{\mathbb{S}^n}h^{\alpha+1}(x,t_k)\, d\sigma_{t_k}\right)^{\frac{\alpha}{1+\alpha}}}{\oint_{\mathbb{S}^n} h^{\alpha}(x,t_k)\, d\sigma_{t_k}}= 1.\end{equation}

After considering a subsequence, we may assume that \begin{equation}\label{100e5} \Omega_{t_k}\to \Omega, \quad u(x,t_k)\to u(x),\end{equation} where $u$ is the support function of $\Omega$. In view of (\ref{100e3}) and (\ref{etabd}), 
\begin{eqnarray}\label{100e4} %\lim_{k\to \infty} \oint_{\mathbb{S}^n} h(x,t_k) \, d\sigma_{t_k} &=&\eta, \nonumber\\
 \lim_{k\to \infty} \oint_{\mathbb{S}^n}h^{\alpha+1}(x,t_k)\, d\sigma_{t_k}=\eta^{1+\alpha},\ \lim_{k\to \infty}\oint_{\mathbb{S}^n} h^{\alpha}(x,t_k)\, d\sigma_{t_k}= \eta^{\alpha}.\end{eqnarray}
 
 \medskip
 
 The following Lemma is crucial for the weak convergence, which is a refined form of the classical H\"odler inequality. 
 
 \begin{lemma}\label{holder room} Let $p,\ q\in \mathbb R^+$ with $\frac1p+\frac 1q=1$, set $\beta=\min\{\frac1p, \frac1q\}$. Let $(M,\mu)$ be a measurable space,  $\forall F\in L^p, \ G\in L^q$, 
 \begin{equation} \int_M |FG| d\mu \le \|F\|_{L^p}\|G\|_{L^q}\left(1-\beta\int_M \left(\frac{|F|^{\frac{p}2}}{(\int_M |F|^p d\mu)^{\frac12}}-\frac{|G|^{\frac{q}2}}{(\int_M |G|^qd\mu )^{\frac12}}\right)^2\right).\end{equation}\end{lemma}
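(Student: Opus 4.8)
The plan is to prove this refined Hölder inequality by reducing to a pointwise inequality after normalization, then integrating. First I would dispose of the trivial cases: if $\|F\|_{L^p}=0$ or $\|G\|_{L^q}=0$ then both sides vanish (the factor in parentheses is at most $1$), so we may assume both norms are positive. By homogeneity — replacing $F$ by $F/\|F\|_{L^p}$ and $G$ by $G/\|G\|_{L^q}$ — it suffices to prove the statement when $\|F\|_{L^p}=\|G\|_{L^q}=1$, in which case the claimed inequality becomes
\begin{equation*}
\int_M |FG|\,d\mu \le 1-\beta\int_M\left(|F|^{p/2}-|G|^{q/2}\right)^2 d\mu,
\end{equation*}
where $\beta=\min\{1/p,1/q\}$ and $\int_M|F|^p\,d\mu=\int_M|G|^q\,d\mu=1$.

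The heart of the matter is then the pointwise estimate: for nonnegative reals $a,b$ with $\frac1p+\frac1q=1$,
\begin{equation*}
ab \le \frac{a^p}{p}+\frac{b^q}{q}-\beta\left(a^{p/2}-b^{q/2}\right)^2.
\end{equation*}
Applying this with $a=|F(x)|$, $b=|G(x)|$ and integrating over $M$ gives, using $\int|F|^p=\int|G|^q=1$,
\begin{equation*}
\int_M|FG|\,d\mu \le \tfrac1p+\tfrac1q-\beta\int_M\left(|F|^{p/2}-|G|^{q/2}\right)^2 d\mu
= 1-\beta\int_M\left(|F|^{p/2}-|G|^{q/2}\right)^2 d\mu,
\end{equation*}
which is exactly what is needed. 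So everything comes down to the one-variable inequality.

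To prove the pointwise inequality, set $s=a^{p/2}$ and $t=b^{q/2}$, so $a=s^{2/p}$, $b=t^{2/q}$, and $ab=s^{2/p}t^{2/q}$; writing $\frac1p=\mu_1$, $\frac1q=\mu_2$ with $\mu_1+\mu_2=1$ and $\beta=\min\{\mu_1,\mu_2\}$, the claim is
\begin{equation*}
s^{2\mu_1}t^{2\mu_2}\le \mu_1 s^2+\mu_2 t^2-\beta(s-t)^2.
\end{equation*}
If $s=0$ or $t=0$ this is immediate, so assume $s,t>0$ and divide by $t^2$; with $r=s/t>0$ the inequality becomes $r^{2\mu_1}\le \mu_1 r^2+\mu_2-\beta(r-1)^2$, a single-variable inequality to be checked for all $r>0$ (and each fixed $\mu_1\in(0,1)$). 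One can verify it by elementary calculus: the function $\phi(r)=\mu_1 r^2+\mu_2-\beta(r-1)^2-r^{2\mu_1}$ satisfies $\phi(1)=0$, $\phi'(1)=0$, and one checks $\phi''(r)\ge 0$ for all $r>0$ using $\beta\le\mu_1$ and $\beta\le\mu_2=1-\mu_1$ (indeed $\phi''(r)=2\mu_1-2\beta-2\mu_1(2\mu_1-1)r^{2\mu_1-2}$, and since $2\mu_1-2\le 0$ one bounds $r^{2\mu_1-2}$ appropriately in the two cases $\mu_1\ge\frac12$ and $\mu_1\le\frac12$), so $r=1$ is a global minimum and $\phi\ge0$.

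The main obstacle is the verification of the scalar inequality $r^{2\mu_1}\le\mu_1 r^2+\mu_2-\beta(r-1)^2$ for all $r>0$ — in particular handling the region $r\to 0^+$ and $r\to\infty$ separately from the neighborhood of $r=1$, and checking convexity of $\phi$ carefully in the two regimes $\mu_1\le\frac12$ and $\mu_1\ge\frac12$. This is where the precise choice $\beta=\min\{1/p,1/q\}$ is used essentially: a larger $\beta$ would fail the second-derivative test, so the constant in the statement is sharp for this argument. Once the scalar inequality is in hand, the passage to the integral form via normalization and integration is routine.
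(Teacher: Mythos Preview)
Your overall strategy --- normalize to $\|F\|_p=\|G\|_q=1$, reduce to the pointwise refinement $ab \le a^p/p + b^q/q - \beta(a^{p/2}-b^{q/2})^2$ of Young's inequality, then integrate --- is exactly the route the paper takes (the paper phrases the normalization as $e^s = |F|^p/\int|F|^p$, $e^t = |G|^q/\int|G|^q$, which is the same thing). The gap is in your verification of the scalar inequality.

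Your claim that $\phi''(r) = 2\mu_1 - 2\beta - 2\mu_1(2\mu_1-1)r^{2\mu_1-2} \ge 0$ for all $r>0$ is false when $\mu_1 > \tfrac12$ (i.e.\ $p<2$, so $\beta = \mu_2 = 1-\mu_1$). In that regime
\[
\phi''(r) = 2(2\mu_1-1)\bigl(1 - \mu_1\, r^{2\mu_1-2}\bigr),
\]
and since $2\mu_1-2<0$ the factor $r^{2\mu_1-2}\to+\infty$ as $r\to 0^+$, forcing $\phi''(r)\to -\infty$. Concretely, with $\mu_1=\tfrac34$ and $r=\tfrac14$ one gets $\phi''(\tfrac14)=-\tfrac12$. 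So $\phi$ is not convex on $(0,\infty)$ and the ``global minimum at $r=1$ by convexity'' argument collapses in this case. The inequality $\phi\ge 0$ is nevertheless true; the cleanest repair is to observe that the scalar claim is invariant under $(s,t,\mu_1,\mu_2)\mapsto(t,s,\mu_2,\mu_1)$, so one may assume $\mu_1\le\tfrac12$ without loss of generality --- and there your convexity computation \emph{does} go through, since then $\beta=\mu_1$ and $\phi''(r)=2\mu_1(1-2\mu_1)r^{2\mu_1-2}\ge 0$. Alternatively, for $\mu_1>\tfrac12$ one can factor $\phi(r)=r\bigl((2\mu_1-1)r+2(1-\mu_1)-r^{2\mu_1-1}\bigr)$ and run a first-derivative test on the bracket, which has its unique minimum (equal to zero) at $r=1$. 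The paper's own argument sidesteps this by working with a function of $\tau=t-s\ge 0$ and showing its \emph{first} derivative is nonnegative, via a case split on whether $\beta=1/p$ or $\beta=1/q$.
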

 \begin{proof} We first prove the following {\bf Claim}. $\forall s, t\in \mathbb R$,
 \begin{equation}\label{cst} e^{\frac{s}{p}+\frac{t}{q}}\le \frac{e^s}{p}+\frac{e^t}{q}-\beta(e^{\frac{s}2}-e^{\frac{t}2})^2.\end{equation}
 We may assume $t\ge s$, set $\tau=t-s$, (\ref{cst}) is equivalent to
  \begin{equation}\label{cstau} e^{\frac{\tau}{q}}\le \frac{1}{p}+\frac{e^{\tau}}{q}-\beta(1-e^{\frac{\tau}2})^2, \ \forall \tau\ge 0.\end{equation}
Set \[\xi(\tau)=\frac{1}{p}+\frac{e^{\tau}}{q}-\beta(1-e^{\frac{\tau}2})^2-e^{\frac{\tau}{q}}.\]
We have $\xi(0)=0$,
\[\xi'(\tau)=\frac{e^{\frac{\tau}q}}{q}\rho, \ \mbox{where} \ \rho(\tau)=e^{\frac{\tau}{p}}(1-\beta q)+q\beta e^{\frac{\tau}2-\frac{\tau}q}-1.\]
 If $\beta=\frac1q$, then $\frac1q\le \frac12$, since $\tau\ge 0$, 
 \[\rho(\tau)=e^{\frac{\tau}{p}}(1-\beta q)+q\beta e^{\frac{\tau}2-\frac{\tau}q}-1=e^{\frac{\tau}2-\frac{\tau}q}-1\ge 0.\]
 If $\beta=\frac1p$, then $\frac1q\ge \frac12$, we have
 \begin{eqnarray*}\rho'(\tau)&=&e^{\frac{\tau}{p}}\left(\frac{1-\beta q}p+\beta q(\frac12-\frac1q)e^{\frac{\tau}2-\frac{\tau}q}\right)\\
& \ge & e^{\frac{\tau}{p}}\left(\frac{1-\beta q}p+\beta q(\frac12-\frac1q)\right)\\
&\ge & e^{\frac{\tau}{p}}\beta q(\frac12-\frac1p)\ge 0.\end{eqnarray*}
We conclude that \[\rho(\tau)\ge 0, \ \forall \tau\ge 0.\]
In turn, \[\xi'(\tau)\ge 0, \ \forall \tau\ge o.\] This yields (\ref{cstau}) and (\ref{cst}). The {\bf Claim} is verified. 

Back to the proof of the lemma. We may assume \[F\ge 0, \ g\ge 0, \ \int F^p>0, \ \int G^q>0.\] Set
\[e^s=\frac{F^p}{\int F^p}, \quad e^t=\frac{G^q}{\int G^q}.\]
Put them into (\ref{cst}) and integrate, as $\frac1p+\frac1q=1$,
\[\frac{\int FG}{(\int F^p)^{\frac1{p}}(\int G^q)^{\frac1{q}}}\le \left(1-\beta\int (\frac{F^{\frac{p}2}}{(\int F^p)^{\frac12}}-\frac{G^{\frac{q}2}}{(\int G^q)^{\frac12}})^2\right).\]
\end{proof}

\medskip

We prove weak convergence.

\begin{proposition}\label{k-weak-conv} $\forall \alpha>\frac{1}{n+2}$, suppose that  (\ref{100e5})  and (\ref{100e4}) hold. Denote \[u_{k}=u(x, t_k), \ \sigma_{n,k}=\sigma_n(u_{ij}(x,t_k)+u(x,t_k)\delta_{ij}).\] Then
\begin{equation}\label{100e6} \lim_{k\to \infty} \oint_{\mathbb S^n} |u_k^{\frac{1}\alpha}\sigma_{n,k}-\frac{f}{\eta}| d\theta =0,\end{equation}
where $\eta$ is defined in (\ref{eta}) which is bounded from below and above in (\ref{etabd}). As a consequence, there is convex body $\Omega\subset \mathbb R^{n+1}$ with $o\in \Omega$, 
\[|\Omega|=|B(1)|, \quad \mathcal{E}_{\alpha, f}(\Omega_{t})\le \mathcal{E}_{\alpha, f}(B(1)),\]
and its support function $u$ satisfies 
\begin{equation}\label{etaMongeAmpere} 
u^{\frac1{\alpha}} S_{\Omega}=\frac1{\eta} fd \theta.
\end{equation}
\end{proposition}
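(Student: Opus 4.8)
The goal is to prove Proposition~\ref{k-weak-conv}, i.e. to upgrade the soft information \eqref{100e4} on the sequence $t_k\to\infty$ to the $L^1$-convergence \eqref{100e6}, and then to read off the Alexandrov equation \eqref{etaMongeAmpere} for the limit body $\Omega$. The engine is the refined Hölder inequality of Lemma~\ref{holder room}: the hypothesis \eqref{100e4} (equivalently \eqref{100e3}) says precisely that, along $t_k$, the Hölder inequality relating $\oint h^{\alpha}\,d\sigma_{t_k}$, $\oint h\,d\sigma_{t_k}$ and $\oint h^{\alpha+1}\,d\sigma_{t_k}$ is \emph{asymptotically tight}, so the squared-difference defect term in Lemma~\ref{holder room} must tend to $0$. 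That forces the two normalized powers of $h$ to agree in $L^2(d\sigma_{t_k})$ in the limit, which (after unwinding the definitions of $h$ and $d\sigma_t$) is exactly the statement that $u_k^{1/\alpha}\sigma_{n,k}$ converges to a constant multiple of $f$.

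\textbf{Step 1: apply the refined Hölder inequality.} Write $\frac{\oint h^{\alpha+1}d\sigma_t}{\oint h\,d\sigma_t\cdot \oint h^{\alpha}d\sigma_t}$ via Lemma~\ref{holder room} with the conjugate exponents $p=\alpha+1$, $q=\frac{\alpha+1}{\alpha}$ applied to $F=h$ and $G=h^{\alpha}$ (so $F G = h^{\alpha+1}$, $\|F\|_{L^p}^p=\oint h^{\alpha+1}d\sigma_t$ and $\|G\|_{L^q}^q=\oint h^{\alpha+1}d\sigma_t$ as well), all integrals taken against the probability measure $d\sigma_t$. With $\beta=\min\{\tfrac1{\alpha+1},\tfrac{\alpha}{\alpha+1}\}>0$, Lemma~\ref{holder room} gives
\begin{equation*}
1-\frac{\oint_{\mathbb{S}^n} h^{\alpha+1}(x,t)\,d\sigma_t}{\oint_{\mathbb{S}^n} h\,d\sigma_t\cdot\oint_{\mathbb{S}^n} h^{\alpha}\,d\sigma_t}
\;\ge\;\beta\oint_{\mathbb{S}^n}\left(\frac{h^{(\alpha+1)/2}}{\big(\oint h^{\alpha+1}d\sigma_t\big)^{1/2}}-\frac{h^{\alpha(\alpha+1)/(2\alpha)}}{\big(\oint h^{\alpha+1}d\sigma_t\big)^{1/2}}\right)^{2}d\sigma_t .
\end{equation*}
Since the right-hand side is nonnegative and \eqref{100e2} says the left-hand side $\to 0$ along $t_k$, we get
\[
\oint_{\mathbb{S}^n}\big(h^{(\alpha+1)/2}(x,t_k)-h^{(\alpha+1)/2}_{\mathrm{norm}}\big)^2\,d\sigma_{t_k}\to 0,
\]
more precisely $\oint\big(h-\oint h\,d\sigma_{t_k}\cdot(\text{suitable normalization})\big)$-type $L^2$ smallness; unwinding, $h^{(\alpha+1)/2}/(\oint h^{\alpha+1}d\sigma_{t_k})^{1/2}$ and $h^{\alpha(\alpha+1)/2}/(\text{same})$ differ by an $L^2(d\sigma_{t_k})$-null sequence. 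Combined with the uniform bounds on $h$ (from Lemma~\ref{f-entropybd}, the diameter bound, $\inf f>0$, $\sup f<\infty$, and the bounds on $u$ and $K$ that the smooth flow provides on each $[0,t_k]$), this yields that $h(x,t_k)$ converges in $L^1(d\sigma_{t_k})$ — hence in $L^1(d\theta)$ after using $\tfrac1C\le u_k/K_k$ is not needed since $d\sigma_t$ is comparable to $d\theta$ only through bounds one must be careful about — to a \emph{constant}. Call the limiting constant $c$; by \eqref{100e4} and \eqref{etabd}, matching $\oint h^{\alpha}d\sigma_{t_k}\to\eta^\alpha$ forces $c=\eta$.

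\textbf{Step 2: translate back to the Monge–Ampère quantity and pass to the limit.} Recall $h_z=f\,u_z^{-1/\alpha}K$ and $d\sigma_t=\tfrac{u}{K}\,d\theta$, so $h\,d\sigma_t = f\,u^{1-1/\alpha}\,d\theta$ and, crucially, $u_k^{1/\alpha}\sigma_{n,k}=u_k^{1/\alpha}K_k^{-1}=f/h$ pointwise (using $\sigma_n(u_{ij}+u\delta_{ij})=K^{-1}$). Therefore
\[
\oint_{\mathbb{S}^n}\Big|u_k^{1/\alpha}\sigma_{n,k}-\frac{f}{\eta}\Big|\,d\theta
=\oint_{\mathbb{S}^n} f\,\Big|\frac1{h(x,t_k)}-\frac1\eta\Big|\,d\theta
\le \sup f\cdot C^2\oint_{\mathbb{S}^n}\big|h(x,t_k)-\eta\big|\,d\theta\to 0,
\]
where $C$ is the uniform two-sided bound on $h$; this is \eqref{100e6}. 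For the final assertion: by \eqref{100e5}, $\Omega_{t_k}\to\Omega$ in Hausdorff distance with $u_k\to u$ uniformly, the constraint $|\Omega_{t_k}|=|B(1)|$ and the entropy bound $\mathcal{E}_{\alpha,f}(\Omega_{t_k})\le\mathcal{E}_{\alpha,f}(B(1))$ pass to the limit by continuity of volume and of the entropy functional, and $o\in\Omega$ follows from the diameter bound together with $u_k>0$ (the support function with respect to the shrink-point, kept positive under normalization). To identify the limit measure, use \eqref{100e6}: for every $g\in C(\mathbb{S}^n)$,
\[
\oint_{\mathbb{S}^n} g\,u_k^{1/\alpha}\,dS_{\Omega_{t_k}} \;\longrightarrow\; \oint_{\mathbb{S}^n} g\,\frac f\eta\,d\theta,
\]
because $u_k^{1/\alpha}\sigma_{n,k}\to f/\eta$ in $L^1(d\theta)$ while $u_k^{1/\alpha}\,dS_{\Omega_{t_k}}=u_k^{1/\alpha}\sigma_{n,k}\,d\theta$ for the smooth bodies $\Omega_{t_k}$. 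On the other hand, weak continuity of surface area measures under Hausdorff convergence (Schneider~\cite{Sch14}) gives $dS_{\Omega_{t_k}}\rightharpoonup dS_\Omega$, and since $u_k\to u$ uniformly with $u>0$ bounded, $u_k^{1/\alpha}\,dS_{\Omega_{t_k}}\rightharpoonup u^{1/\alpha}\,dS_\Omega$. Equating the two weak limits yields $u^{1/\alpha}S_\Omega=\tfrac1\eta f\,d\theta$, which is \eqref{etaMongeAmpere}.

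\textbf{Main obstacle.} The delicate point is Step~1: converting the asymptotic equality in the three-factor Hölder inequality into genuine \emph{pointwise} (or $L^1$) control of $h(x,t_k)$ rather than merely $L^2(d\sigma_{t_k})$ control of a normalized half-power of $h$. This requires (a) choosing the exponents in Lemma~\ref{holder room} correctly so that $FG=h^{\alpha+1}$ and both $\|F\|_{L^p}$, $\|G\|_{L^q}$ equal $(\oint h^{\alpha+1}d\sigma_t)^{1/(\alpha+1)}$, $(\oint h^{\alpha+1}d\sigma_t)^{\alpha/(\alpha+1)}$, which is exactly why \eqref{100e3} has the shape it does; and (b) using the \emph{uniform} (in $k$) upper and lower bounds on $h$ — these come from Lemma~\ref{f-entropybd} ($D(t)\le D_{\alpha,\tau,\delta}$, entropy bounds, $\tfrac1C\le\oint h\,d\sigma_t\le C$) together with the standard fact that along the normalized flow on a finite time-interval $u$, $K$ stay pinched between positive constants — to turn $L^2$-smallness of a power of $h$ about its mean into $L^1$-smallness of $h-\eta$ itself, via the elementary inequality $|a-b|\le C'|a^{(\alpha+1)/2}-b^{(\alpha+1)/2}|$ valid on a compact interval $[1/C,C]\subset(0,\infty)$. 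Once that quantitative comparison is in place, everything else is soft: Hausdorff/weak convergence of the bodies and their surface area measures, continuity of volume and entropy, and the identity $u^{1/\alpha}\sigma_n = f/h$.
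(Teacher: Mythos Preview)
Your strategic outline is right---use Lemma~\ref{holder room} to convert the asymptotic tightness \eqref{100e2}/\eqref{100e3} into an $L^2(d\sigma_{t_k})$-defect going to zero, then bootstrap to $L^1(d\theta)$-convergence of $u_k^{1/\alpha}\sigma_{n,k}$---but the execution has two real gaps.

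\textbf{(a) The H\"older application is vacuous.} With your choice $F=h$, $G=h^{\alpha}$, $p=\alpha+1$, $q=\tfrac{\alpha+1}{\alpha}$, one has $\|F\|_{L^p}^p=\|G\|_{L^q}^q=\oint h^{\alpha+1}d\sigma_t$ and $\oint FG=\oint h^{\alpha+1}d\sigma_t$, so the H\"older inequality is an \emph{identity} and the defect term is identically zero (indeed $F^{p/2}=G^{q/2}=h^{(\alpha+1)/2}$). This does not give your displayed inequality involving $\oint h\cdot\oint h^{\alpha}$ in the denominator. The correct move (as in the paper) is $F=h$, $G\equiv 1$: then Lemma~\ref{holder room} compares $\oint h\,d\sigma_{t_k}$ with $(\oint h^{\alpha+1}d\sigma_{t_k})^{1/(\alpha+1)}$, and \eqref{100e3} says this is asymptotically tight, so
\[
\oint_{\mathbb S^n}\Big(\big(\tfrac{h(x,t_k)}{\eta(t_k)}\big)^{(1+\alpha)/2}-1\Big)^2\,d\sigma_{t_k}\longrightarrow 0.
\]

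\textbf{(b) There is no uniform two-sided pointwise bound on $h(x,t_k)$.} Lemma~\ref{f-entropybd} controls only \emph{integrals} of $h$ against $d\sigma_t$, and the smooth-flow estimates on $u$ and $K$ are uniform only on \emph{finite} time intervals $[0,T]$; they are not uniform as $t_k\to\infty$. If one had $\tfrac1C\le K(x,t)\le C$ uniformly, one would get $C^2$ convergence of the flow and the whole weak-convergence machinery would be unnecessary. So your Step~2 inequality $\oint f|1/h-1/\eta|\,d\theta\le \sup f\cdot C^2\oint|h-\eta|\,d\theta$ and your elementary inequality $|a-b|\le C'|a^{(\alpha+1)/2}-b^{(\alpha+1)/2}|$ on a fixed compact interval are not justified.

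The paper bypasses pointwise control entirely. It writes
\[
\oint\big|u_k^{1/\alpha}\sigma_{n,k}-\tfrac{f}{\eta(t_k)}\big|\,d\theta
=\oint|\gamma_k-1|\,u_k^{1/\alpha}\sigma_{n,k}\,d\theta
\le\Big(\oint|\gamma_k-1|^{1+\alpha}d\sigma_{t_k}\Big)^{\frac1{1+\alpha}}
\Big(\oint u_k^{1/\alpha^2}\sigma_{n,k}\,d\theta\Big)^{\frac{\alpha}{1+\alpha}},
\]
with $\gamma_k=h(\cdot,t_k)/\eta(t_k)$; the second factor is bounded by $D(t_k)^{1/\alpha^2}|\partial\Omega_{t_k}|\le C$ using only the diameter bound. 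For the first factor one splits $\mathbb S^n$ into $\Sigma_k=\{|\gamma_k-1|\le\tfrac12\}$ and its complement and uses the elementary (and scale-free) inequalities
\[
|\gamma_k-1|\le A_\alpha\big|\gamma_k^{(1+\alpha)/2}-1\big|\ \text{on }\Sigma_k,
\qquad
|\gamma_k-1|^{1+\alpha}\le A_\alpha\big|\gamma_k^{(1+\alpha)/2}-1\big|^{2}\ \text{on }\Sigma_k^c,
\]
which require no a priori bound on $\gamma_k$. This reduces $\oint|\gamma_k-1|^{1+\alpha}d\sigma_{t_k}$ to powers of $\oint|\gamma_k^{(1+\alpha)/2}-1|^{2}d\sigma_{t_k}$, and the latter $\to0$ by (a). That is the missing idea.
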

\begin{proof} We only need to verify (\ref{100e6}). By (\ref{100e4}), it is equivalent to prove
\begin{equation}\label{100e66} \lim_{k\to \infty} \oint_{\mathbb S^n} |u_k^{\frac{1}\alpha}\sigma_{n,k}-f\eta^{-1}(t_k)| d\theta =0.\end{equation}

Since $D(t_k)$ is bounded,
\[\oint_{\mathbb S^n} u_k^{\frac{1}{\alpha^2}}\sigma_{n,k} d\theta\le (D(t_k))^{\frac1{\alpha^2}}\oint_{\mathbb S^n} u_k^{\frac{1}{\alpha^2}}\sigma_{n,k} d\theta\le (D(t_k))^{\frac1{\alpha^2}}|\partial \Omega_{t_k}|\le C.\]
\begin{eqnarray} \oint_{\mathbb S^n} |u_k^{\frac{1}\alpha}\sigma_{n,k}-f\eta^{-1}(t_k)| d\theta &=&\oint_{\mathbb S^n} |\frac{f}{\eta(t_k)u_k^{\frac{1}\alpha}\sigma_{n,k}}-1 |u_k^{\frac{1}\alpha}\sigma_{n,k} d\theta\nonumber\\
&\le & \left(\oint_{\mathbb S^n} |\frac{f}{\eta(t_k)u_k^{\frac{1}\alpha}\sigma_{n,k}}-1 |^{1+\alpha} d\sigma_{t_k}\right)^{\frac{1}{1+\alpha}}\left(\oint_{\mathbb S^n} u_k^{\frac{1}{\alpha^2}}\sigma_{n,k} d\theta\right)^{\frac{\alpha}{1+\alpha}}\nonumber \\
&\le & C \left(\oint_{\mathbb S^n} |f\eta^{-1}(t_k)u_k^{-\frac{1}\alpha}\sigma^{-1}_{n,k}-1 |^{1+\alpha} d\sigma_{t_k}\right)^{\frac{1}{1+\alpha}}.
\end{eqnarray}

By  (\ref{100e2}), (\ref{100e4}) and Lemma \ref{holder room}, with $p=\alpha+1$, $F^{\frac{1}{1+\alpha}}=h(x,t_k)$, $G=1$
\begin{equation}\label{100e7} \lim_{k\to \infty} \oint \left((\frac{h(x,t_k)}{\eta(t_k)})^{\frac{1+\alpha}2}-1\right)^2d\sigma_{t_k}=0.\end{equation}

%The proposition will be proved if we can show that (\ref{100e7}) implies \[\lim_{k\to \infty}\oint_{\mathbb S^n} |\frac{h(x,t_k)}{\eta(t_k)}-1 |^{1+\alpha} d\sigma_{t_k}=\lim_{k\to \infty}\oint_{\mathbb S^n} |f\eta^{-1}(t_k)u_k^{-\frac{1}\alpha}\sigma^{-1}_{n,k}-1 |^{1+\alpha} d\sigma_{t_k}=0.\]
For $t_k$ fixed, let \[\gamma_k(x)=f\eta^{-1}(t_k)u_k^{-\frac{1}\alpha}\sigma^{-1}_{n,k}=h(x,t_k)\eta^{-1}(t_k)\] and  set \[\Sigma_{k}=\{x\in \mathbb S^n \ | \ |\gamma_k(x)-1|\le \frac12.\}\] It is straightforward to check that $\exists A_{\alpha}\ge 1$ depending only on 
$\alpha$ such that \begin{eqnarray*} A_{\alpha}|\gamma^{\frac{1+\alpha}2}_k(x)-1|&\ge& |\gamma_k(x)-1|, \ \forall x\in \Sigma_k; \\
A_{\alpha} |\gamma^{\frac{1+\alpha}2}_k(x)-1|^{2}&\ge & |\gamma_k(x)-1|^{1+\alpha}, \ \forall x\in \Sigma_{k}^c.\end{eqnarray*}
Since $ |\gamma^{\frac{1+\alpha}2}_k(x)-1|\le 2^{1+\alpha}, \ \forall x\in \Sigma_k$, let $\delta=\min\{1+\alpha, 2\}$,
\begin{eqnarray*}
\ \oint_{\mathbb S^n} |\gamma_k(x)-1 |^{1+\alpha} d\sigma_{t_k}=&\frac{1}{\omega_n}\left(\int_{\Sigma_k} |\gamma_k(x)-1 |^{1+\alpha} d\sigma_{t_k}+\int_{\Sigma_k^c} |\gamma_k(x)-1 |^{1+\alpha} d\sigma_{t_k}\right)\\
\le & \frac{A^{1+\alpha}_{\alpha}}{\omega_n} \left(\int_{\Sigma_k} |\gamma_k^{\frac{1+\alpha}2}(x)-1 |^{1+\alpha} d\sigma_{t_k}+\int_{\Sigma_k^c} |\gamma_k^{\frac{1+\alpha}2}(x)-1 |^{2} d\sigma_{t_k}\right)\\
\le & \frac{(2A_{\alpha})^{1+\alpha}}{\omega_n}  \left(\int_{\Sigma_k} |\gamma_k^{\frac{1+\alpha}2}(x)-1 |^{\delta} d\sigma_{t_k}+\int_{\Sigma_k^c} |\gamma_k^{\frac{1+\alpha}2}(x)-1 |^{2} d\sigma_{t_k}\right)\\
\le & (2A_{\alpha})^{1+\alpha} \left(\oint_{\mathbb S^n} |\gamma_k^{\frac{1+\alpha}2}(x)-1 |^{\delta} d\sigma_{t_k}+\oint_{\mathbb S^n} |\gamma_k^{\frac{1+\alpha}2}(x)-1 |^{2} d\sigma_{t_k}\right)\\
\le & (2A_{\alpha})^{1+\alpha} \left((\oint_{\mathbb S^n} |\gamma_k^{\frac{1+\alpha}2}(x)-1 |^{2} d\sigma_{t_k})^{\frac{\delta}2}+\oint_{\mathbb S^n} |\gamma_k^{\frac{1+\alpha}2}(x)-1 |^{2} d\sigma_{t_k}\right).\end{eqnarray*} 
By (\ref{100e7}),  \[\lim_{k\to \infty}\oint_{\mathbb S^n} |\gamma_k^{\frac{1+\alpha}2}(x)-1 |^{2} d\sigma_{t_k}=0.\]
It follows (\ref{100e66}).
\end{proof}
\medskip

\begin{proof}[Proof of Theorem \ref{MongeAmpere-with-entropy}] It follows from Proposition \ref{k-weak-conv} after a proper rescaling as $\eta$ satisfies (\ref{etabd}) and \eqref{etaMongeAmpere}.
\end{proof}

\medskip

\section{The general Monge-Amp\`ere equations - proof of Theorem~\ref{mainT1}}
\label{sec:main}

In order to prove Theorem~\ref{mainT1}, we need weak approximation in the following sense:

\begin{lemma}\label{weak-approximation} 
For $\delta,\varepsilon\in(0,\frac12)$ and  a Borel probability measure $\mu$ on $\mathbb{S}^n$, $n\geq 1$, 
there exists a seguence $d\mu_k=\frac1{\omega_n}\,f_k\,d\theta$ of Borel  probability measures whose weak limit is $\mu$ and
 $f_k\in C^\infty( \mathbb{S}^n)$ satisfies $f_k>0$ and the following properties:
\begin{description}
\item[(i)] If $\mu\left(\Psi(z^\bot\cap \mathbb{S}^n,2\delta)\right)\leq 1-\varepsilon$ for  any $z\in S^{n-1}$,  then
\begin{equation}
\label{eq:f-diamestsi00}
\oint_{\Psi(z^\bot\cap \mathbb{S}^n,\delta)}f_k\leq 1-\varepsilon \mbox{ \ for  any $z\in S^{n-1}$}.
\end{equation}
\item[(ii)] If $\mu(\Psi(L\cap \mathbb{S}^n,2\delta))<(1-2\delta)\cdot \frac{\ell}{n+1}$
for any linear $\ell$-subspace $L$ of $\R^{n+1}$, $\ell=1,\ldots,n$, then
\begin{equation}
\label{eq:strict-subspace-function0}
\mu_k\left(\Psi\left(L\cap \mathbb{S}^n,\delta\right)\right)<(1-\delta)\cdot \frac{\ell}{n+1}.
\end{equation}

\item[(iii)] If $d\mu=\frac1{\omega_n}\,f\,d\theta$ for 
$f\in L^{r}(\mathbb{S}^n)$ where $r>1$,
and
\begin{equation}
\label{eq:f-diamestsiii0}
\oint_{\Psi(z^\bot\cap \mathbb{S}^n,2\delta)}f^r\leq \varepsilon
\end{equation}
for  any $z\in S^{n-1}$,  then
\begin{equation}
\label{eq:f-diamestsiii00}
\int_{\Psi(z^\bot\cap \mathbb{S}^n,\delta)}f_k^r\leq 2^r\varepsilon \mbox{ \ for  any $z\in S^{n-1}$}.
\end{equation}

\end{description}

\end{lemma}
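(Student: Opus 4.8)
The plan is to build the whole sequence from one standard mollification of $\mu$ plus a vanishing uniform correction that enforces strict positivity. Fix a smooth rotationally symmetric approximate identity $\{\psi_\rho\}_{\rho>0}$ on $\mathbb{S}^n\times\mathbb{S}^n$: $\psi_\rho(x,y)=\phi_\rho(d(x,y))\geq 0$ with $\phi_\rho\in C^\infty$, $\operatorname{supp}\phi_\rho\subset[0,\rho]$, normalized so that $\int_{\mathbb{S}^n}\psi_\rho(x,y)\,d\theta(y)=1$ for every $x$ (this integral is $x$-independent by $O(n+1)$-symmetry). Let $\nu=\frac1{\omega_n}\theta$ be the uniform probability measure, pick $\rho_k\downarrow 0$ with $\rho_k\leq\delta$ and $\epsilon_k\downarrow 0$ with $\epsilon_k\leq\min\{\varepsilon,\frac{\delta}{n+1}\}$, and set
$$
\mu_k:=(1-\epsilon_k)(\mu*\psi_{\rho_k})+\epsilon_k\,\nu,\qquad d\mu_k=\tfrac1{\omega_n}f_k\,d\theta .
$$
Then $f_k=(1-\epsilon_k)\,\omega_n\int_{\mathbb{S}^n}\psi_{\rho_k}(\cdot,y)\,d\mu(y)+\epsilon_k\in C^\infty(\mathbb{S}^n)$, $f_k\geq\epsilon_k>0$, $\oint_{\mathbb{S}^n}f_k=1$, and $\mu_k\to\mu$ weakly (test against $h\in C(\mathbb{S}^n)$, use $h*\psi_{\rho_k}\to h$ uniformly and $\epsilon_k\to0$); when $\mu$ has an $L^r$-density $\frac1{\omega_n}f$ the same measure reads $f_k=(1-\epsilon_k)(f*\psi_{\rho_k})+\epsilon_k$ and the convergence is in total variation. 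These verifications are routine.

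The geometric input is a contraction estimate for the collars. Since $x\mapsto|\Pi_{L^\bot}x|$ is $1$-Lipschitz for the geodesic metric (chordal $\leq$ geodesic, $\|\Pi_{L^\bot}\|\leq1$), the geodesic $\rho$-neighbourhood of $\Psi(L\cap\mathbb{S}^n,\delta)$ lies in $\Psi(L\cap\mathbb{S}^n,\delta+\rho)$, uniformly in $L$. Writing the convolution as an average and using $\operatorname{supp}\psi_{\rho_k}(\cdot,y)\subset B(y,\rho_k)$ and $\int_{\mathbb{S}^n}\psi_{\rho_k}(x,y)\,d\theta(x)=1$, one gets, for a fixed subspace $L$ and with the abbreviation $\Psi_t=\Psi(L\cap\mathbb{S}^n,t)$,
$$
(\mu*\psi_{\rho_k})(\Psi_\delta)=\int_{\mathbb{S}^n}\Big(\int_{\Psi_\delta}\psi_{\rho_k}(x,y)\,d\theta(x)\Big)d\mu(y)\ \leq\ \mu(\Psi_{2\delta})
$$
because $\rho_k\leq\delta$; similarly Jensen's inequality for $t\mapsto t^r$ gives $(f*\psi_{\rho_k})^r\leq f^r*\psi_{\rho_k}$, so $\oint_{\Psi_\delta}(f*\psi_{\rho_k})^r\leq\oint_{\Psi_{2\delta}}f^r$ (now with $\Psi_t=\Psi(z^\bot\cap\mathbb{S}^n,t)$). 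With these facts (ii) and (iii) follow by absorbing the harmless $\epsilon_k$-term into the built-in slack: $\mu_k(\Psi_\delta)\leq(1-\epsilon_k)\mu(\Psi_{2\delta})+\epsilon_k<(1-2\delta)\frac{\ell}{n+1}+\frac{\delta}{n+1}\leq(1-\delta)\frac{\ell}{n+1}$ since $\ell\geq1$; and, by convexity of $t\mapsto t^r$, $f_k^r\leq(1-\epsilon_k)(f*\psi_{\rho_k})^r+\epsilon_k$, hence $\oint_{\Psi_\delta}f_k^r\leq(1-\epsilon_k)\varepsilon+\epsilon_k\leq2\varepsilon<2^r\varepsilon$ (using $r>1$; the averaged integral $\oint$ is read on both sides of (iii), as in the hypothesis and Theorem~\ref{f-diamests}).

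Part (i) is the delicate case: there $1-\varepsilon$ occurs on both sides, so no slack is left for the uniform correction unless $\nu$ itself obeys the bound. The key step — and the one I expect to be the main obstacle to spot — is that the hypothesis of (i) \emph{forces} $\nu(\Psi(z^\bot\cap\mathbb{S}^n,\delta))\leq1-\varepsilon$ for every $z$. Indeed, integrate the hypothesis $\mu(\{x:|\langle x,z\rangle|>2\delta\})\geq\varepsilon$ over $z\in\mathbb{S}^n$ against $\nu$; interchanging the order of integration and using that $z\mapsto\nu(\{z:|\langle x,z\rangle|>2\delta\})$ is the constant $1-\nu(\Psi(z^\bot\cap\mathbb{S}^n,2\delta))$ (by $O(n+1)$-invariance of $\nu$) yields $1-\nu(\Psi(z^\bot\cap\mathbb{S}^n,2\delta))\geq\varepsilon$, whence $\nu(\Psi(z^\bot\cap\mathbb{S}^n,\delta))\leq\nu(\Psi(z^\bot\cap\mathbb{S}^n,2\delta))\leq1-\varepsilon$. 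Then, with $\Psi_t=\Psi(z^\bot\cap\mathbb{S}^n,t)$ and the contraction estimate,
$$
\oint_{\Psi_\delta}f_k=\mu_k(\Psi_\delta)\leq(1-\epsilon_k)\mu(\Psi_{2\delta})+\epsilon_k\,\nu(\Psi_\delta)\leq(1-\epsilon_k)(1-\varepsilon)+\epsilon_k(1-\varepsilon)=1-\varepsilon,
$$
which is (i). Finally, all uniformity statements (over $z$, and over $\ell$-dimensional subspaces $L$) are immediate from compactness of the sphere and of the relevant Grassmannian together with the $1$-Lipschitz bound.
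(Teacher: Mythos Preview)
Your proof is correct, and it takes a genuinely different route from the paper's. The paper partitions $\mathbb{S}^n$ into tiny spherically convex cells $B_{k,i}$ and places a smooth bump $h_{k,i}$ of the prescribed total mass $\mu(B_{k,i})$ in each cell, then adds the constant $\tfrac1k$ for positivity and normalizes. Parts (i) and (ii) follow for large $k$ simply because each cell is small enough to be swallowed by the gap between the $\delta$-- and $2\delta$--collars; for (iii) the paper bounds $M_{k,i}^r\,\tilde\theta(B_{k,i})$ via H\"older on each cell to recover $\oint_{B_{k,i}}f^r$. You instead mollify $\mu$ by a rotation-invariant approximate identity $\psi_\rho$ supported in a $\rho$-ball and mix in $\epsilon_k\nu$; the geometric contraction $\Psi_\delta$--neighborhood $\subset\Psi_{2\delta}$ (from the $1$-Lipschitz bound on $|\Pi_{L^\bot}\cdot|$) replaces the partition argument, and Jensen on the kernel average replaces the cell-wise H\"older in (iii). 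The distinctive point in your argument is the treatment of (i): since the bound $1-\varepsilon$ appears on both sides with no slack, you show that the hypothesis on $\mu$ itself forces $\nu(\Psi(z^\bot,2\delta))\le 1-\varepsilon$ via Fubini and rotational invariance, so the $\epsilon_k\nu$-term costs nothing. The paper's piecewise construction sidesteps this issue entirely (the constant $\tfrac1k$ is absorbed by renormalization and the passage to large $k$), while your mollification argument is shorter, more standard, and even yields $\oint_{\Psi_\delta}f_k^r\le 2\varepsilon$, slightly sharper than the stated $2^r\varepsilon$. Your remark that \eqref{eq:f-diamestsiii00} should carry the averaged $\oint$ (as in the hypothesis and in Theorem~\ref{f-diamests}) is well taken; the paper's own proof reads it that way.
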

\begin{proof}
 For $k\geq 1$,
let $\{B_{k,i}\}_{i=1,\ldots,m(k)}$ be a partition of $S^n$ into spherically convex Borel measurable sets
$B_{k,i}$ with ${\rm diam}B_{k,i}\leq \frac1k$ and $\theta(B_{k,i})>0$.
For each $B_{k,i}$, we choose a $C^\infty$ function $h_{k,i}:\mathbb{S}^n\to[0,\infty)$ such that
for $M_{k,i}=\max h_{k,i}$ and the probbility measure $d\tilde{\theta}=\frac1{\omega_n}\,d\theta$, we have
\begin{itemize}
\item $h_{k,i}=0$ if $x\not\in B_{k,i}$;
\item $M_{k,i}\leq (1+\frac1k)\cdot \frac{\mu(B_{k,i})}{\tilde{\theta}(B_{k,i})}$;
\item $\theta\left(\left\{x\in B_{k,i}:h_{k,i}(x)<M_{k,i}\right\}\right)<\frac1k\,\theta(B_{k,i})$;
\item $\int_{B_{k,i}}h_{k,i}\,d\tilde{\theta}=\mu(B_{k,i})$.
\end{itemize} 
We consider the positive $C^\infty$ function $\tilde{f}_k=\frac1k+\sum_{i=1}^{m(k)}h_{k,i}$, and hence 
$f_k=\left(\oint_{\mathbb{S}^n}\tilde{f}_k\right)^{-1}\tilde{f}$ satisfies that the probability measure
$d\mu_k=f_k\,d\tilde{\theta}$ tends weakly to $\mu$, and for large $k\geq 1/\delta$, $\mu_k$ satisfies (i), and if (ii) holds then 
$\mu_k$ also satisfies
\eqref{eq:strict-subspace-function0}.

Turning to (iii), we assume that $d\mu=f\,d\tilde{\theta}$ for $f\in L^{r}(\mathbb{S}^n)$ where $r>1$, 
and $f$ satisfies \eqref{eq:f-diamestsiii0}. For any large $k$ and $i=1,\ldots,m(k)$, we deduce from the H\"older inequality that
\begin{align*}
\oint_{B_{k,i}} \tilde{f}_k^r&=\oint_{B_{k,i}} \left(h_{k,i}+\frac1k\right)^r\leq
2^{r-1}\oint_{B_{k,i}} h_{k,i}^r+2^{r-1}\oint_{B_{k,i}} \frac1{k^r}\\
&\leq
2^{r-1}\tilde{\theta}(B_{k,i})M_{k,i}^r+2^{r-1}\oint_{B_{k,i}} \frac1{k^r}\\
&\leq 2^{r-1}\left(1+\frac1k\right)^r\tilde{\theta}(B_{k,i})
\left(\frac{\int_{B_{k,i}} f}{\tilde{\theta}(B_{k,i})}\right)^r+2^{r-1}\oint_{B_{k,i}} \frac1{k^r}\\
&\leq 2^{r-1}\left(1+\frac1k\right)^r\oint_{B_{k,i}} f^r+2^{r-1}\oint_{B_{k,i}}\frac1{k^r}.
\end{align*}
Summing this estimate up for large $k$ and all $B_{k,i}$ with 
$B_{k,i}\cap \Psi(z^\bot\cap \mathbb{S}^n,\delta)\neq \emptyset$, and using that
$\oint_{\mathbb{S}^n}\tilde{f}_k\geq 2^{-1/2}$ for large $k$, we deduce that
$$
\oint_{\Psi(z^\bot\cap \mathbb{S}^n,\delta)} f_k^r\leq \sqrt{2}
\oint_{\Psi(z^\bot\cap \mathbb{S}^n,\delta)} \tilde{f}_k^r\leq
\sqrt{2}\cdot2^{r-1}\left(1+\frac1k\right)^r\oint_{\Psi(z^\bot\cap \mathbb{S}^n,2\delta)} f^r+
\sqrt{2}\cdot\frac{2^{r-1}}{k^r}\leq
2^r\varepsilon.
$$
\end{proof}

For $\alpha>0$ and $p=1-\frac1\alpha$, the $L_p$-surface area $dS_{\Omega,p}=u^{1-p}dS_\Omega$ was introduced in the seminal works  Lutwak \cite{Lut93,Lut93a,Lut96} for a convex body $\Omega\subset\R^{n+1}$
with $o\in\Omega$ and support function $u$. Since the surface area measure is weakly continuous for $p<1$, and if $K\subset\R^{n+1}$
is an at most $n$ dimensional compact convex set, then $S_{K,p}\equiv 0$ for $p<1$, we have the following statement.

\begin{lemma}
\label{non-degenerate-weak-limit}
If convex bodies $\Omega_m\subset\R^{n+1}$ tend to a compact convex set $K\subset\R^{n+1}$ where $o\in\Omega_m,K$,
and $\liminf_{m\to \infty}S_{\Omega_m,p}>0$, then ${\rm int}K\neq \emptyset$ and
$S_{\Omega_m,p}$ tends weakly  to $S_{K,p}$.
\end{lemma}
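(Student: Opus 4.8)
The plan is to prove the two conclusions separately, exploiting the hypothesis that $\liminf_{m\to\infty}S_{\Omega_m,p}(\mathbb{S}^n)>0$ together with the scaling behaviour of $S_{\Omega,p}$. First I would note that since $o\in\Omega_m$ and $\Omega_m\to K$ in the Hausdorff metric, the support functions $u_m$ of $\Omega_m$ are nonnegative, uniformly bounded, and converge uniformly to the support function $u$ of $K$; in particular the bodies $\Omega_m$ and $K$ all lie in a fixed ball $B(R)$.

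To see that $\mathrm{int}\,K\neq\emptyset$, I would argue by contradiction: suppose $K$ is contained in an affine hyperplane, equivalently in at most $n$ dimensions. For a convex body $\Omega\subset B(R)$ with $o\in\Omega$ and inradius $\rho=\rho(\Omega)$, one has the elementary estimate $S_{\Omega,p}(\mathbb{S}^n)=\int_{\mathbb{S}^n}u_\Omega^{1-p}\,dS_\Omega\leq R^{1-p}S_\Omega(\mathbb{S}^n)$ when $p\le1$ (using $u_\Omega\leq R$ and $1-p\ge0$), and the total surface area measure $S_\Omega(\mathbb{S}^n)=\mathcal{H}^n(\partial\Omega)$ is controlled by the surface area of a thin slab, hence tends to $0$ as $\rho(\Omega)\to0$ with $\Omega\subset B(R)$ fixed. (Here one must also handle the case $u_\Omega(x)\to0$ for some $x$, but since $1-p>0$ this only helps.) Since $\Omega_m\to K$ with $K$ degenerate forces $\rho(\Omega_m)\to0$, we would get $S_{\Omega_m,p}(\mathbb{S}^n)\to0$, contradicting the hypothesis. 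This establishes $\mathrm{int}\,K\neq\emptyset$.

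Once we know $\mathrm{int}\,K\neq\emptyset$, the body $K$ has a positive support function away from the directions where $o$ could be on the boundary; more precisely, since $o\in K$ and $K$ is $(n+1)$-dimensional, $u_K>0$ everywhere if $o\in\mathrm{int}\,K$, while if $o\in\partial K$ the function $u_K$ is still positive off a closed great subsphere (the directions normal to the supporting hyperplanes through $o$), which has $S_K$-measure zero. The surface area measure $S_{\Omega_m}$ of $\Omega_m$ converges weakly to $S_K$ by the classical continuity of the surface area measure under Hausdorff convergence (Schneider \cite{Sch14}). Then for any $g\in C(\mathbb{S}^n)$ I would write $\int g\,dS_{\Omega_m,p}=\int g\,u_m^{1-p}\,dS_{\Omega_m}$ and pass to the limit: $u_m^{1-p}\to u_K^{1-p}$ uniformly on any set where $u_K$ is bounded away from $0$, and the possible bad set is $S_K$-null, so a standard argument (split $\mathbb{S}^n$ into a neighbourhood of the bad subsphere, whose $S_{\Omega_m}$-measure is uniformly small by weak convergence plus the null-set property, and its complement, where uniform convergence applies) gives $\int g\,dS_{\Omega_m,p}\to\int g\,u_K^{1-p}\,dS_K=\int g\,dS_{K,p}$.

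The main obstacle is the last step: $g\,u_m^{1-p}$ need not converge uniformly when $o\in\partial K$, so one cannot directly quote weak convergence of $S_{\Omega_m}$ against the test function $g\,u^{1-p}$. The fix is the two-region estimate just described, for which the key quantitative input is that $S_{\Omega_m}$ assigns uniformly small mass to a small tubular neighbourhood of the offending great subsphere --- this follows because $S_K$ gives that subsphere measure zero, the neighbourhoods shrink, and weak convergence of $S_{\Omega_m}\to S_K$ upgrades to this uniform smallness via a standard $\limsup$ argument on closed neighbourhoods. With that in hand the convergence $\int g\,dS_{\Omega_m,p}\to\int g\,dS_{K,p}$ follows, completing the proof.
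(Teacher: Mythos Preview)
Your argument for $\mathrm{int}\,K\neq\emptyset$ contains a genuine error. You claim that the total surface area $S_\Omega(\mathbb{S}^n)=\mathcal{H}^n(\partial\Omega)$ tends to $0$ as the inradius $\rho(\Omega)\to 0$ inside a fixed ball $B(R)$, but this is false: a thin slab $[-R,R]^n\times[-\rho,\rho]$ has surface area at least $2(2R)^n$ regardless of how small $\rho$ is. Thus your bound $S_{\Omega,p}(\mathbb{S}^n)\le R^{1-p}S_\Omega(\mathbb{S}^n)$, while correct, gives nothing. The fix is to bring in the \emph{volume}: for $p\le 0$ write $u^{1-p}=u\cdot u^{-p}\le R^{-p}u$, so $S_{\Omega,p}(\mathbb{S}^n)\le R^{-p}\int u\,dS_\Omega=(n+1)R^{-p}|\Omega|$; for $0<p<1$ apply H\"older with exponents $\tfrac1{1-p},\tfrac1p$ to get $S_{\Omega,p}(\mathbb{S}^n)\le\big((n+1)|\Omega|\big)^{1-p}\big(\mathcal{H}^n(\partial\Omega)\big)^p$. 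In either case $|\Omega_m|\to|K|=0$ for degenerate $K$ yields the contradiction. This is what the paper has in mind when it remarks, just before the lemma and without further proof, that $S_{K,p}\equiv 0$ for degenerate $K$ when $p<1$.

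For the weak convergence $S_{\Omega_m,p}\to S_{K,p}$ once $\mathrm{int}\,K\neq\emptyset$, your two-region argument is more delicate than necessary and two of its auxiliary claims are inaccurate: the zero set $\{u_K=0\}$ is $N(K,o)\cap\mathbb{S}^n$, which need not lie in a great subsphere (take $o$ a vertex of a simplex), and it need not be $S_K$-null (if $o$ lies on a facet of $K$ with outer normal $e$, then $S_K(\{e\})>0$ while $u_K(e)=0$). The clean route is simply that for $p<1$ the map $t\mapsto t^{1-p}$ is uniformly continuous on $[0,R]$, so $u_m^{1-p}\to u_K^{1-p}$ uniformly on $\mathbb{S}^n$; combined with the classical weak convergence $S_{\Omega_m}\to S_K$ and the uniform bound $S_{\Omega_m}(\mathbb{S}^n)\le\mathcal{H}^n(\partial B(R))$, this gives $\int g\,u_m^{1-p}\,dS_{\Omega_m}\to\int g\,u_K^{1-p}\,dS_K$ for every $g\in C(\mathbb{S}^n)$ directly, with no splitting needed. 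The paper itself simply quotes this weak continuity of $S_{\cdot,p}$ for $p<1$ as known.
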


For the reader's sake, let us recall Theorem~\ref{mainT1}:

\begin{theorem}\label{mainT10} For  $\alpha>\frac1{n+2}$
and finite non-trivial Borel measure $\mu$ on $\mathbb{S}^n$, $n\geq 1$, 
there exists a weak solution of \eqref{eq:Alexandrov} provided the following holds:
\begin{description}
\item[(i)] If $\alpha>1$ and $\mu$ is not concentrated onto any great subsphere $x^\bot\cap \mathbb{S}^n$, 
$x\in \mathbb{S}^n$.

\item[(ii)] If $\alpha=1$ and $\mu$   satisfies  that
for any linear $\ell$-subspace $L\subset \R^{n+1}$ 
with $1\leq \ell\leq n$, we have
\begin{description}
\item[(a)] $\displaystyle 
\mu(L\cap \mathbb{S}^n)\leq  \frac{\ell}{n+1}\cdot \mu(\mathbb{S}^n)$;
\item[(b)] equality  in (a) for a  linear $\ell$-subspace $L\subset \R^{n+1}$ 
with $1\leq d\leq n$ implies the existence of a complementary linear $(n+1-\ell)$-subspace $\widetilde{L}\subset \R^{n+1}$
 such that
${\rm supp}\,\mu\subset L\cup \widetilde{L}$.
\end{description} 

\item[(iii)] If $\frac1{n+2}<\alpha<1$ and $d\mu=f\,d\theta$ for non-negative 
$f\in L^{\frac{n+1}{n+2-\frac1\alpha}}( \mathbb{S}^n)$ with $\int_{\mathbb{S}^n}f>0$.

\end{description}

\end{theorem}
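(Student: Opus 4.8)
The plan is to produce the solution for a general datum $\mu$ as a Hausdorff limit of the smooth positive solutions already supplied by Theorem~\ref{MongeAmpere-with-entropy}: one mollifies $\mu$ by means of Lemma~\ref{weak-approximation}, applies Theorem~\ref{MongeAmpere-with-entropy} to each mollification, uses the uniform diameter control of Theorem~\ref{f-diamests} to keep the approximate bodies from degenerating, and finally identifies the limit with the help of Lemma~\ref{non-degenerate-weak-limit}. By rescaling $\Omega$ we may assume $\mu(\mathbb{S}^n)=1$; writing $p=1-\frac1\alpha$ (so $p<1$ always, and $-n-1<p<0$ when $\alpha<1$) it then suffices to find a convex body $\Omega$ with $o\in\Omega$ and $S_{\Omega,p}=\mu$, since $dS_{\Omega,p}=u^{1-p}\,dS_\Omega=u^{\frac1\alpha}\,dS_\Omega$.

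The first step is to fix constants $\delta,\tau,\varepsilon$, \emph{depending only on $\alpha$ and $\mu$}, so that an entire approximating sequence satisfies the hypotheses of Theorem~\ref{f-diamests} simultaneously. If $\alpha>1$: as $\mu$ charges no great subsphere and $z\mapsto\mu(\Psi(z^\bot\cap\mathbb{S}^n,2\delta))$ is upper semicontinuous (the bands vary continuously in the Hausdorff metric), compactness of $\mathbb{S}^n$ yields $\delta,\varepsilon\in(0,\tfrac12)$ with $\mu(\Psi(z^\bot\cap\mathbb{S}^n,2\delta))\le 1-\varepsilon$ for all $z$, and Lemma~\ref{weak-approximation}(i) then gives $f_k\to\mu$ with $\oint_{\Psi(z^\bot\cap\mathbb{S}^n,\delta)}f_k\le 1-\varepsilon$, which is condition (i) of Theorem~\ref{f-diamests} with $\tau=\varepsilon$. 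If $\alpha=1$ and (a) holds \emph{strictly} for every proper subspace, the same compactness argument over the Grassmannians produces $\delta$ with $\mu(\Psi(L\cap\mathbb{S}^n,2\delta))<(1-2\delta)\frac{\ell}{n+1}$ for all $L$, and Lemma~\ref{weak-approximation}(ii) yields condition (ii) of Theorem~\ref{f-diamests} with $\tau=\delta$. If $\frac1{n+2}<\alpha<1$, one picks $\tau$ \emph{from $\alpha$ alone}: by \eqref{Bentropyalphauppbound} and the entropy monotonicity of Lemma~\ref{cov-e}(a), every body $\Omega_t$ on the normalized flow issuing from $B(1)$ satisfies $\tfrac12\exp\big(\tfrac{\alpha-1}{\alpha}\mathcal{E}_{\alpha,f_k}(\Omega_t)\big)\ge\tfrac12\cdot 2^{\,1-\frac1\alpha}=:\tau$, so the closing clause of Theorem~\ref{f-diamests} applies with this $\tau$; then set $\varepsilon=\omega_n(\tau/2)^{r}$ with $r=\frac{n+1}{n+2-1/\alpha}>1$, use absolute continuity of $A\mapsto\int_A f^{r}$ together with $|\Psi(z^\bot\cap\mathbb{S}^n,2\delta)|\to 0$ uniformly in $z$ to obtain $\delta$ with $\oint_{\Psi(z^\bot\cap\mathbb{S}^n,2\delta)}f^{r}\le\varepsilon$, and apply Lemma~\ref{weak-approximation}(iii). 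This constant bookkeeping is routine but the most fiddly part of the argument.

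Next, apply Theorem~\ref{MongeAmpere-with-entropy} to each $f_k$: this gives a convex body $\Omega_k$ with $o\in\Omega_k$ whose support function $u_k$ solves $u_k^{\frac1\alpha}\,dS_{\Omega_k}=f_k\,d\theta$, together with $\lambda_k\in(C^{-1},C)$ such that $|\lambda_k\Omega_k|=|B(1)|$ and $\mathcal{E}_{\alpha,f_k}(\lambda_k\Omega_k)\le\mathcal{E}_{\alpha,f_k}(B(1))$. Inserting this entropy bound and \eqref{Bentropyalphauppbound} into Theorem~\ref{f-diamests} bounds ${\rm diam}(\lambda_k\Omega_k)$, hence ${\rm diam}\,\Omega_k$, uniformly in $k$; since every $\Omega_k$ contains $o$, the Blaschke selection theorem gives a subsequence with $\Omega_k\to K$ in the Hausdorff metric, where $K$ is a compact convex set with $o\in K$. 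Now $S_{\Omega_k,p}=u_k^{1-p}\,dS_{\Omega_k}=u_k^{\frac1\alpha}\,dS_{\Omega_k}=f_k\,d\theta$, so $S_{\Omega_k,p}(\mathbb{S}^n)=\int_{\mathbb{S}^n}f_k\,d\theta=\omega_n$ for all $k$; as $p<1$, Lemma~\ref{non-degenerate-weak-limit} then forces ${\rm int}\,K\neq\emptyset$ and $S_{\Omega_k,p}\to S_{K,p}$ weakly. On the other hand $S_{\Omega_k,p}=f_k\,d\theta=\omega_n\,d\mu_k\to\omega_n\,d\mu$ weakly, whence $S_{K,p}=\omega_n\mu$. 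Using $S_{cK,p}=c^{\,n+1-p}S_{K,p}$ with $n+1-p=n+\frac1\alpha>0$, choose $c>0$ with $c^{\,n+1-p}\omega_n=1$; then $\Omega:=cK$ is a convex body with $o\in\Omega$ and $S_{\Omega,p}=\mu$, that is $d\mu=u_\Omega^{\frac1\alpha}\,dS_\Omega$, which is \eqref{eq:Alexandrov}. This settles (i), (iii), and (ii) under the strict subspace concentration condition.

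The remaining case, $\alpha=1$ with \emph{equality} in (a) for some $\ell_0$-subspace $L_0$, is the real obstacle, because the approximation of Lemma~\ref{weak-approximation}(ii) cannot hold there: weak convergence $\mu_k\to\mu$ forces $\liminf_k\mu_k(\Psi(L_0\cap\mathbb{S}^n,\delta))\ge\mu(L_0\cap\mathbb{S}^n)=\frac{\ell_0}{n+1}$, which is incompatible with the strict inequality one needs. This case I would handle by induction on $n$ via the structural condition (b): writing ${\rm supp}\,\mu\subset L_0\cup\widetilde L_0$ with $\widetilde L_0$ a complementary subspace, set $\mu_1=\mu|_{L_0\cap\mathbb{S}^n}$ and $\mu_2=\mu|_{\widetilde L_0\cap\mathbb{S}^n}$; a direct computation shows that, regarded on $L_0\cap\mathbb{S}^n\cong\mathbb{S}^{\ell_0-1}$ and on $\widetilde L_0\cap\mathbb{S}^n\cong\mathbb{S}^{n-\ell_0}$, both $\mu_1$ and $\mu_2$ satisfy conditions (a)--(b) in their own dimensions (the equality at $L_0$ fixing $\mu_1(L_0\cap\mathbb{S}^n)=\frac{\ell_0}{n+1}$ and $\mu_2(\widetilde L_0\cap\mathbb{S}^n)=\frac{n+1-\ell_0}{n+1}$, which is exactly what makes the rescaled bounds fit). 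The base cases ($n=1$, and $\ell_0=1$ or $n+1-\ell_0=1$) are elementary or covered by B\"or\"oczky, Trinh \cite{BoT17}, and the inductive step produces convex bodies $K_1\subset L_0$ and $K_2\subset\widetilde L_0$ with $o\in K_i$ solving $S_{K_1,0}=\mu_1$ and $S_{K_2,0}=\mu_2$ in the respective subspaces; the direct sum $\Omega:=K_1\oplus K_2$ then has $o\in\Omega$ and $S_{\Omega,0}=\mu$ by the standard decomposition of the cone-volume measure of a direct sum (cf.\ \cite{BLYZ13,CLZ19}). I expect this equality case, rather than the flow or the limit passage, to be where the real care is required.
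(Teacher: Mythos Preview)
Your proof is correct and follows essentially the same route as the paper: mollify $\mu$ via Lemma~\ref{weak-approximation}, solve for each $f_k$ by Theorem~\ref{MongeAmpere-with-entropy}, bound diameters uniformly via Theorem~\ref{f-diamests} (the paper uses the integral condition $\tau\le\tfrac12\oint f_k u_k^{p}$ in case~(iii) rather than your entropy version, but both work), pass to the limit via Lemma~\ref{non-degenerate-weak-limit}, and handle the $\alpha=1$ equality case by the same dimensional induction and direct-sum construction. One small correction: the induction should bottom out at $n=0$ (measures on $S^0=\{\pm1\}$, which is trivial), not at $n=1$, and \cite{BoT17} treats $0<p<1$ rather than the logarithmic case $p=0$, so that citation does not cover your base case.
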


\begin{proof} Let $\alpha>\frac1{n+2}$. After rescaling, we may assume that the $\mu$ in 
\eqref{eq:Alexandrov} is a probability measure.
We consider the 
 seguence $d\mu_k=\frac1{\omega_n}f_k\,d\theta$ of Lemma~\ref{weak-approximation}
of Borel probablity measures whose weak limit is $\mu$ and
 $f_k\in C^\infty( \mathbb{S}^n)$ satisfies $f_k>0$. For each $f_k$, let
$\Omega_k\subset\R^{n+1}$ be the convex body  with $o\in\Omega_k$ 
provided by Theorem~\ref{MongeAmpere-with-entropy}
whose support function $u_k$ is the solution
of the Monge-Ampere equation
\begin{equation}\label{eq:MongeAmpere1k}
u_k^{\frac1\alpha}\,dS_{\Omega_k}=f _k\,d\theta;
\end{equation}
 $\exists \lambda_k>0$ under control, with $|\lambda_k\Omega|=|B(1)|$, $\Omega_k$ satisfies that
\begin{equation}\label{eq:entropy-for-MongeAmperek}
\mathcal{E}_{\alpha, f_k} (\lambda_k\Omega_k)\leq \mathcal{E}_{\alpha, f_k} (B(1)).
\end{equation}
We also need the observations that 
\begin{equation}
\label{Omegakvol}
|\Omega_k|=\frac1{n+1}\int_{\mathbb{S}^n}u_k\,dS_{\Omega_k},
\end{equation}
and if $p=1-\frac1\alpha$, then
\begin{equation}
\label{SOmegakp}
S_{\Omega_k,p}(\mathbb{S}^n)=\int_{\mathbb{S}^n}u_k^{1-\frac1\alpha}\,dS_{\Omega_k}
=\omega_n\oint_{\mathbb{S}^n}f_k=\omega_n.
\end{equation}

We claim that if there exists $\Delta>0$ depending on 
$n$, $\alpha$ and $\mu$ such that
\begin{equation}
\label{Omegakbounded}
{\rm diam}\Omega_k\leq \Delta, 
\mbox{ \ then Theorem~\ref{mainT10} holds.}
\end{equation}
 To prove this claim, we note that
\eqref{Omegakbounded} yields the existence of a subsequence of $\{\Omega_k\}$ tending to a compact convex set 
$\Omega$ with $o\in \Omega$,
which is a convex body by \eqref{SOmegakp} and 
 Lemma~\ref{non-degenerate-weak-limit}. Moreover,  Lemma~\ref{non-degenerate-weak-limit}
also yields that $\Omega$ is an Alexandrov solution of \eqref{eq:Alexandrov}, verifying the claim \eqref{Omegakbounded}.

We divide the rest of the argument verifying Theorem~\ref{mainT10} into   three cases.\\

\noindent{\bf Case 1} {\it $\alpha>1$}

Since $\mu$ is not concentrated to any great subsphere, there exist $\delta\in(0,\frac12)$ depending on $\mu$ such that
$\mu\left(\Psi(z^\bot\cap \mathbb{S}^n,2\delta)\right)\leq 1-2\delta$ for  any $z\in S^{n-1}$. 
It follows from Lemma~\ref{weak-approximation} that we may assume that
\begin{equation}
\label{eq:f-diamestsi000}
\oint_{\Psi(z^\bot\cap \mathbb{S}^n,\delta)}f_k\leq 1-\delta \mbox{ \ for  any $z\in S^{n-1}$}.
\end{equation}
Now Theorem~\ref{MongeAmpere-with-entropy} implies that $\lambda_k\geq c$ for a constant $c>0$ depending on 
$n$, $\delta$ and  $\alpha$, and in turn Theorem~\ref{MongeAmpere-with-entropy}, \eqref{Bentropyalphauppbound}
 and $\frac1{\alpha}-1<0$ yield that
$$
\mathcal{E}_{\alpha, f} (\Omega_k)=\frac{\alpha}{\alpha-1}\cdot\log\lambda_k^{\frac1{\alpha}-1}
+\mathcal{E}_{\alpha, f} (\lambda_k\Omega_k)\leq 
\frac{\alpha}{\alpha-1}\cdot\log\lambda_k^{\frac1{\alpha}-1}
+\mathcal{E}_{\alpha, f} (B(1))\leq C
$$
for a constant $C>0$ depending on 
$n$, $\delta$ and $\alpha$. 
Therefore Theorem~\ref{f-diamests} and \eqref{eq:f-diamestsi000} imply 
that the sequence $\{\Omega_k\}$ is bounded, and in turn
the claim \eqref{Omegakbounded} implies 
 Theorem~\ref{mainT10} if $\alpha>1$.\\

\noindent{\bf Case 2} {\it $\alpha=1$}

The argument is by induction on $n\geq 0$ where we do not put any restriction on the probability meaure $\mu$
in the case $n=0$. For the case $n=0$, we observe that  any finite measure $\mu$ on $S^0$ can represented
in the form $d\mu=u\,dS_\Omega$ for a suitable segment $\Omega\subset\R^1$.

For the case $n\geq 1$, assuming that we have verified Theorem~\ref{mainT10} (ii) in smaller dimensions, we
consider a  Borel measure probability $\mu$ on $S^n$ satisfying (a) and (b). \\

\noindent{\bf Case 2.1} {\it There exists
a  linear $\ell$-subspace $L\subset \R^{n+1}$
with $1\leq \ell\leq n$ and
$\mu(L\cap \mathbb{S}^n)= \frac{\ell}{n+1}\cdot \mu(\mathbb{S}^n)$.}

Let
$\widetilde{L}\subset \R^{n+1}$ be the complementary linear $(n+1-\ell)$-subspace
with ${\rm supp}\,\mu\subset L\cup \widetilde{L}$, and hence 
$\mu(\widetilde{L}\cap \mathbb{S}^n)= \frac{n+1-\ell}{n+1}\cdot \mu(\mathbb{S}^n)$.
It follows by induction that there exist an $\ell$-dimensional compact convex set $K'\subset L$
and an $(n+1-\ell)$-dimensional compact convex set $\widetilde{K}'\subset \widetilde{L}$
such that  $\mu\mres(L\cap S^n)=\ell\,V_{K'}$
and $\mu\mres(\widetilde{L}\cap S^n)=(n+1-\ell)V_{\widetilde{K}'}$.
Finally, for $K=\widetilde{L}^\bot\cap (K'+L^\bot)$ and
$\widetilde{K}=L^\bot\cap (\widetilde{K}'+\widetilde{L}^\bot)$, there exist $\alpha,\tilde{\alpha}>0$ such that
$$
\mu=(n+1)V_{\alpha K+\tilde{\alpha}\widetilde{K}}.
$$

\noindent{\bf Case 2.2} {\it $\mu(L\cap \mathbb{S}^n)< \frac{\ell}{n+1}\cdot \mu(\mathbb{S}^n)$ for 
any  linear $\ell$-subspace $L\subset \R^{n+1}$
with $1\leq \ell\leq n$.}

It follows by a compactness argument that there exists $\delta\in(0,\frac12)$ depending on $\mu$ such that
$\mu(\Psi(L\cap \mathbb{S}^n,2\delta))<(1-2\delta)\cdot \frac{\ell}{n+1}$
for any linear $\ell$-subspace $L$ of $\R^{n+1}$, $\ell=1,\ldots,n$.
We consider the 
 seguence of probability measures
$d\mu_k=\frac1{\omega_n}f_k\,d\theta$ of Lemma~\ref{weak-approximation} tending weakly to $\mu$
such that $f_k>0$, $f_k\in C^\infty(\mathbb{S}^n)$ and 
\begin{equation}
\label{eq:strict-subspace-function0approx}
\mu_k\left(\Psi\left(L\cap \mathbb{S}^n,\delta\right)\right)<(1-\delta)\cdot \frac{\ell}{n+1}
\end{equation}
for any linear $\ell$-subspace $L$ of $\R^{n+1}$, $\ell=1,\ldots,n$.

For each $f_k$, let $\Omega_k\subset\R^{n+1}$ with $o\in\Omega_k$ be the  convex body
provided by Theorem~\ref{MongeAmpere-with-entropy}
whose support function $u_k$ is the solution
of the Monge-Ampere equation
\eqref{eq:MongeAmpere1} and satisfies \eqref{eq:entropy-for-MongeAmpere} with $f=f_k$ and $\lambda=\lambda_k$
where $|B(1)|=|\lambda_k\Omega_k|$ for $\lambda_k>0$, and
\begin{align*}
|\Omega_k|&=\frac1{n+1}\,\int_{\mathbb{S}^n}u_k\det(\bar{\nabla}^2_{ij} u_k+u_k\bar{g}_{ij})\,d\theta=
\frac{\omega_n}{n+1}\,\oint_{\mathbb{S}^n}u_k\det(\bar{\nabla}^2_{ij} u_k+u_k\bar{g}_{ij})\\
&=|B(1)|\,\oint_{\mathbb{S}^n}f_k=|B(1)|,
\end{align*} 
and hence $\lambda_k=1$. In particular, \eqref{Bentropyalphauppbound} yields
$$
\mathcal{E}_{1, f_k} (\lambda_k\Omega_k)\leq \mathcal{E}_{1, f_k} (B(1))\leq \log 2.
$$
Since $\mathcal{E}_{1, f_k} (\Omega_k)$ is bounded,
\eqref{eq:strict-subspace-function0approx} and Theorem~\ref{f-diamests} imply that the 
sequence $\Omega_k$ stays bounded, as well. 
Therefore the claim \eqref{Omegakbounded} yields
 Theorem~\ref{mainT10} if $\alpha=1$.\\

\noindent{\bf Case 3} {\it $\frac1{n+2}<\alpha<1$}

We set  $p=1-\frac1\alpha\in(-n-1,0)$ and $r=\frac{n+1}{n+1+p}>1$, and 
\begin{equation}
\label{eq:taufinalcond}
\tau= \frac12\cdot 2^{-\frac{|p|(n+1)}{|p|+n}},
\end{equation}
and choose $\delta\in(0,\frac12)$ such that
$$
\oint_{\Psi(z^\bot\cap \mathbb{S}^n,2\delta)}f^r\leq \frac{\tau^r}{2^r} 
$$
for  any $z\in S^{n-1}$.
We deduce from Lemma~\ref{weak-approximation} that
if  $z\in S^{n-1}$, then
\begin{equation}
\label{eq:f-diamestsiiifk}
\oint_{\Psi(z^\bot\cap \mathbb{S}^n,\delta)}f_k^r\leq 
\tau^r. 
\end{equation}

We deduce from  \eqref{eq:MongeAmpere1k}, \eqref{Omegakvol} and
$|\lambda_k\Omega_k|=|B(1)|=\frac{\omega_n}{n+1}$ that
\begin{equation}
\label{entropyint}
\oint_{\mathbb{S}^n}u_k^{p}f_k=\frac{n+1}{\omega_n}
\int_{\mathbb{S}^n}u_k\,dS_{\Omega_k}=
\frac{n+1}{\omega_n}\,|\Omega_k|=\lambda_k^{-n-1}.
\end{equation}
In particular, \eqref{Bentropyalphauppbound} and
the upper bound on the entropy yields that
\begin{align}
%\label{eq:ukfkalphalambdaiii}
\nonumber
2^{p}&\leq
\exp\left(p\cdot
\mathcal{E}_{\alpha, f_k} (B(1))\right)
\leq 
\exp\left(p\cdot
\mathcal{E}_{\alpha, f} (\lambda_k\Omega_k)\right)
\leq
\oint_{\mathbb{S}^n}(\lambda_ku_k)^{p}f_k\\
\label{eq:ukfkalphalambdavoliii}
&=\lambda_k^{p}\int_{\mathbb{S}^n}u_k\,dS_{\Omega_k}=
\lambda_k^{p-n}\cdot\frac{n+1}{\omega_n}\cdot|\lambda_k\Omega_k|=\lambda_k^{p-n}.
\end{align}
It follows from \eqref{eq:ukfkalphalambdavoliii} that $\lambda_k\leq 2^{\frac{|p|}{|p|+n}}$, and in turn
 \eqref{entropyint}  yields that
$$
\oint_{\mathbb{S}^n}u_k^{p}f_k\geq 2^{-\frac{|p|(n+1)}{|p|+n}}.
$$
Therefore, $\tau\leq \frac12\oint_{\mathbb{S}^n}u_k^{p}f_k$ 
({\it cf.} \eqref{eq:taufinalcond}), \eqref{eq:f-diamestsiiifk} and
Theorem~\ref{f-diamests} yield
that the sequence $\{\Omega_k\}$ is bounded, and in turn
the claim \eqref{Omegakbounded} implies 
 Theorem~\ref{mainT10} if $\frac1{n+2}<\alpha<1$.
\end{proof}

%\section{Appendix} Here we show by an explicit example that the result of Lemma \ref{lemma:22} fails if $\alpha>\frac{1}{n}$.

%\bigskip

%\noindent {\it Acknowledgements.}

\bibliographystyle{amsalpha}

\end{document}